\documentclass[12pt]{article}

\usepackage[a4paper,margin=30mm]{geometry}

\usepackage{amsmath,amssymb,amsthm,mathtools}
\usepackage{graphicx}
\usepackage{tikz}
\usepackage{xcolor}
\usepackage{hyperref}
\newtheorem{theorem}{Theorem}
\newtheorem{lemma}{Lemma}

\newtheorem{corollary}{Corollary}
\newtheorem{remark}{Remark}
\newtheorem{example}{Example}

\newcommand{\R}{\mathbb{R}}
\newcommand{\N}{\mathbb{N}}
\newcommand{\SOS}{\Sigma[x]}
\newcommand{\QM}{\mathcal{Q}}
\newcommand{\K}{K}

\def\revise#1{\textcolor{orange}{#1}}

\title{Convergence rate of the moment-SOS hierarchy for univariate polynomial optimization}
\author{Didier Henrion\footnote{LAAS-CNRS, Université de Toulouse, France, and Faculty of Electrical Engineering, Czech Technical University in Prague, Czechia.} and Mohab Safey El Din\footnote{Sorbonne Université, CNRS, LIP6, Paris, France.}}
\date{\today}

\begin{document}

\maketitle

\begin{abstract}
	We study the convergence rate of the moment-SOS (sum-of-squares) hierarchy
	for polynomial optimization problems (POPs) on a bounded subset of the real line described by
	arbitrary polynomial inequalities. We prove that, for every fixed univariate POP, the relaxation error is bounded by
	$O(1/r^2)$, where $r$ is the relaxation order. In particular, boundary
	degeneracies in the polynomial description of the feasible set affect the
	constant but not the convergence exponent. The proof combines the structure
	of finitely generated univariate quadratic modules with a Chebyshev
	polynomial construction that approximately recovers the natural generators of
	the feasible set while controlling the degree of the certificate. We also give an
	elementary degree-four example for which the relaxation error is exactly
	$1/(2r(r-1))$, showing that the quadratic rate is optimal. Equivalent
	reformulations connect this example to a cubic univariate problem and to a
	bivariate POP whose feasible set has a cusp
	singularity at the minimizer.
\end{abstract}

\section{Introduction}

\paragraph{The moment-SOS hierarchy for POP.}

A polynomial optimization problem (POP) consists of minimizing globally a given polynomial subject to polynomial inequalities:
\begin{equation}\label{eq:pop}
	f^* := \min_x f(x) \ \text{s.t.}\ x \in K:=\{x \in \R^n : p_i(x) \geq 0, \ i=1,\ldots,n_p\}
\end{equation}
where $f, p_1, \ldots, p_{n_p} \subset \R[x]$ are given polynomials.
It is assumed that $K$ is non-empty and compact, so that a minimizer exists. In full generality, \eqref{eq:pop} is a difficult non-convex problem, potentially with several minimizers. It includes as particular cases all combinatorial optimization problems. The moment-SOS (sums of squares) hierarchy, as originally proposed by Lasserre \cite{L01}, is a computationally efficient approach for polynomial optimization. It consists of constructing a family of convex relaxations of increasing size, generating a sequence of lower bounds on the minimum. Under some conditions, solving a sufficiently large convex relaxation is equivalent to solving the original POP. 
Let $\SOS \subset \R[x]$ denote the cone of polynomials that can be written as sums of squares (SOS), and let $p_0(x) \equiv 1$ for notational convenience. The dual SOS formulation of the moment-SOS hierarchy consists of solving the semidefinite optimization problems
\begin{equation}\label{eq:sos}
	f_r := \sup_{v \in \R} v \ \text{s.t.}\ f-v \in \QM_{{r}}(p)
\end{equation}
in the truncated quadratic module
\begin{equation}\label{eq:tqm}
	\QM_r(p):= \left\{\sum_{i=0}^{n_p} \sigma_i p_i, \ \sigma_i \in \SOS, \ \text{deg}(\sigma_i p_i) \leq 2r\right\} \subset \R[x]
\end{equation}
indexed by $r \in \N$ such that $2r \geq \max\{\text{deg}\:f, \text{deg}\:p_1, \ldots, \text{deg}\:p_{n_p}\}$. This problem is called the SOS relaxation of order $r$. See \cite{H23} for an elementary tutorial in the moment-SOS hierarchy for polynomial optimization, \cite{N23,T24} for recent overviews, and \cite{M08,S24} for the underlying real algebraic geometry.

\paragraph{Relaxation gap and convergence.}
An SOS relaxation is characterized by its relaxation gap, or error
\[
	e_r := f^*-f_r \geq 0
\]
which is non-negative by construction.
In \cite{L01}, asymptotic convergence was shown:
\[
	e_r \geq e_{r+1} \geq \cdots \geq \lim_{r\to\infty} e_r = 0
\]
by using Putinar's Positivstellensatz \cite{P93} which states that any given polynomial strictly positive on $K$ belongs to the quadratic module $\QM_r(p)$ if $r$ is chosen sufficiently large.
In \cite{N14}, generic finite convergence was shown: $e_{r^*} = 0$ for a finite $r^*$ depending on the POP data. Non-finite convergence may occur, i.e. $e_r > 0$ for all finite $r$, but only for specific POPs whose data $f, p_1, \ldots, p_{n_p}$ lie on a low-dimensional manifold. In this paper, we are precisely interested in these instances, and we want to quantify the speed at which the error sequence $e_r$ converges to zero. 

{\paragraph{Convergence rates.}
Quantifying the speed of this convergence amounts to bounding the degree of the
SOS multipliers needed to certify positivity. All the estimates below concern
the quadratic module \eqref{eq:tqm} and the Putinar Positivstellensatz. Sharper rates are typically available for the preordering, which also allows products of the $p_i$ as
multipliers in the Schm\"udgen Positivstellensatz, but the corresponding relaxations involve $2^{n_p}$ instead of
$n_p+1$ semidefinite blocks, so that the Putinar-type rates are the practically
relevant ones. The first degree bound, exponential in the inverse of the
accuracy, was given in \cite[Thm.~6]{NS07} and yields only an
inverse-logarithmic rate \cite[Thm.~8]{NS07}. Whether this exponential
dependence could be removed stayed open for more than a decade, until
\cite{BM23} obtained the first general polynomial degree bound, governed by a
{\L}ojasiewicz exponent attached to the description of the set; see
\cite{BMP25} for refinements and \cite{HVZ26} for a recent sharpening based on
a lift-and-project construction. Much stronger rates are known on distinguished
sets, through the polynomial kernel method and its Christoffel-Darboux
reformulation: a rate $O(1/r^2)$ on the sphere for homogeneous $f$ \cite{FF21},
extended to arbitrary $f$ in \cite{BL25}. On the hypercube $[-1,1]^n$,
\cite[Thm.~3, Cor.~15]{BS24} prove a rate $O(1/r)$, whereas the only available
lower bound is of order $1/r^8$ \cite[Thm.~4, Cor.~16]{BS24}; a new family of
squared approximation kernels has recently improved the upper rate to
$O(\log^3 r/r^2)$ \cite{GDV26}, leaving the lower bound as the loose end. This
illustrates the fact that the study of the convergence rate of the moment-SOS
hierarchy is currently an active research topic, see the recent overviews
\cite{LS26,STL26}.}

\paragraph{Univariate POP.}
In this paper, we study the convergence rate of the moment-SOS hierarchy in the particular case of a univariate POP, i.e. $n=1$ in \eqref{eq:pop}.
Unless the degree $d$ of $f$ is very large, the univariate POP \eqref{eq:pop} is not challenging as an optimization problem. Indeed, if $z_1, \dots z_{d-1}$ denote the roots of the derivative of $f$, then 
{a global minimizer on \(K\) is attained among the finitely many
points $\bigl(\{z_1,\ldots,z_{d-1}\}\cap K\bigr)
	\cup
	\{a_1,b_1,\ldots,a_{m-1},b_{m-1}\}$},
and there is no need for a sophisticated machinery such as the moment-SOS hierarchy. However, studying the worst case convergence rate  in this simple univariate setting provides significant insight on the weaknesses of the hierarchy. Indeed, many challenging multivariate POPs with specific degeneracies reduce to univariate POPs, see e.g. \cite{HLM25}.

\paragraph{Contributions}
In the univariate POP setup \eqref{eq:pop}, our main contributions are twofold. First, we prove that there is a constant $C$ (depending on the POP data) such that
\[
	0 \leq   e_r \leq \frac{C}{r^2}
\]
for sufficiently large $r$. It means that the moment-SOS hierarchy cannot converge slower than $O(1/r^2)$.
Second, we describe an elementary degree 4 example achieving exactly this convergence rate, i.e. our convergence rate is tight. Our example is a simplification of a well-known example due originally to Stengle \cite{S96}, recently solved in rational arithmetic \cite{H25}. Finally, we relate our example, through equivalent reformulations, to an elementary cubic POP with two constraints, and to a bivariate POP whose feasible set has a cusp singularity at the minimizer \cite[Ex.~9.4.6(3)]{M08}, \cite[Ex.~A.3]{BM25}, \cite{K25}, thereby illustrating how multivariate POPs with specific degeneracies reduce to the univariate setting studied here.

\section{Natural Generators}

Since we assume that the set $K\subset \R$ in \eqref{eq:pop} is bounded and semi-algebraic, it can be expressed as a finite union of bounded intervals
\[
	K = [a_1,b_1] \cup [a_2,b_2] \cup \cdots \cup [a_{m-1},b_{m-1}]
\]
for some reals $a_i \leq b_i < a_{i+1}$ and integer $m > 1$. Associated to this representation is the natural choice of generators defined in \cite[\S 2.3]{KM02}, see also \cite[\S 6.1.2]{S24}:
\[
	g_1(x) := x-a_1, \ g_2(x) := (x-b_1)(x-a_2), \ldots, g_m(x) = b_{m-1}-x
\]
such that
\[
	K = \{x \in \R : g_i(x) \geq 0, \ i=1,\ldots,m\}.
\]

\begin{example}
	Let $K=[-1,0] \cup [1,2]$. Then $$g_1(x)=x+1,\quad g_2(x)=x(x-1), \quad g_3(x)=2-x.$$
	For $f(x) = g_1(x)g_2(x) = x^3-x$ it holds $f^* = \min_K f = 0$. The SOS relaxation \eqref{eq:sos} of order $r=2$ yields $e_2 = 6-4\sqrt{2} \approx 0.34315$ with
	\[
		f(x)+6-4\sqrt{2} = (x+\sqrt{2}-2)^2 g_1(x) + (\sqrt{2}-1)^2 g_2(x).
	\]
	The SOS relaxation \eqref{eq:sos} of order $r=3$ yields $e_3 = \tfrac18 = 0.125$ with
	\[
		f(x)+\tfrac18 = \tfrac{1}{12}(x^2-\tfrac52 x+1)^2 g_1(x) + \tfrac14 (x+\tfrac32)^2 g_2(x) +   \tfrac{1}{12}(x^2+\tfrac12 x-\tfrac12)^2 g_3(x).
	\]
	And finally the SOS relaxation \eqref{eq:sos} of order $r=4$ yields $e_4 = 0$ with
	\begin{align*}
		f(x) = & x^2(x-1)^2(\tfrac{1}{20}(x-\tfrac32)^2+\tfrac{3}{16})g_1(x) + \\
		       & (x+1)^2(\tfrac{1}{10}(x-3)^2+\tfrac{1}{10}) g_2(x) +
		\tfrac{1}{20}x^2(x^2-1)^2 g_3(x).
	\end{align*}
	In this case the moment-SOS hierarchy converges at the 4th relaxation order:
	\[
		e_2 =  6-4\sqrt{2} > e_3 =  \frac18 > e_4 = 0.
	\]
	This example was studied in \cite[Ex. 4.4(a)]{KMS05} as a limit case of the parametric problem
	\[
		K_N = [-1,0] \cup [1,N], \quad N \geq 2.
	\]
	It was shown that $g_1 g_2 \in \QM_{{r_N}}(g) \setminus \QM_{{r_N-1}}(g)$ with truncation degree $r_N$ not uniformly bounded in $N$, i.e.  $\lim_{N\to\infty} r_N = \infty$. In other words, the moment-SOS hierarchy has finite convergence when $K_N$ is bounded, but the convergence can be arbitrarily slow. Note that an affine change of variable shows that the same phenomenon appears with the uniformly bounded set $[0, \tfrac{1}{N}] \cup [\tfrac{2}{N},1]$. A similar parametric univariate example with arbitrarily slow convergence was studied in \cite{HLM25}.

	Note also that this arbitrary slow convergence of the moment-SOS hierarchy is a property of the chosen natural generator presentation, not merely of the set and objective. The same set can be described by the
	single quartic generator:
	\[
		K_N=  \{ x\in \R :  h_N(x):=(x+1)x(x-1)(N-x)\geq 0 \}
	\]
	and the moment-SOS hierarchy then admits a uniform order-\(3\) certificate for every \(N\geq 2\), namely
	\[
		f(x)
		=
		\frac{f(x)^2}{N(N^2-1)}
		+
		\frac{\left(x+\frac{N}{2}\right)^2+\frac{3N^2-4}{4}}{N(N^2-1)}\,h_N(x).
	\]
    Indeed, $h_N$ is the product of the natural generators of the
	compact set $K_N$, which has no isolated point, so that $\QM(h_N)$ already
	contains every polynomial nonnegative on $K_N$
	\cite[Ex.~6.5.17(3)]{S24}.
\end{example}

Note that \eqref{eq:tqm} is a finite-dimensional truncation of the quadratic module
\begin{equation*}
	\QM(p):= \left\{\sum_{i=0}^{n_p} \sigma_i p_i, \ \sigma_i \in \SOS \right\} \subset \R[x].
\end{equation*}

\begin{lemma}[Archimedean property]\label{lem:archi}
	There exists $R\geq 1$ such that $R^2-x^2 \in \QM(p)$.
\end{lemma}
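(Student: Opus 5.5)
We plan to use the classical result that, in one variable, every finitely generated quadratic module describing a compact set is Archimedean. This holds even for an arbitrary description $p$, not only for the natural generators. The cleanest route uses a boundedness criterion: a quadratic module $M\subset\R[x]$ is Archimedean as soon as it contains a polynomial $q$ with $\{q\ge 0\}$ compact. This is Schmüdgen's/Wörmann's criterion in the univariate setting, as stated in \cite[\S 6.1]{S24} or \cite{KM02}. Our goal is therefore to exhibit such a $q$ in $\QM(p)$.

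\textbf{Step 1 (some $p_i$ has odd degree or negative leading coefficient).} Because $K$ is bounded, $K^c$ contains a half-line $(c,\infty)$, and for each large $x$ some $p_i(x)<0$. Since there are finitely many $p_i$, one fixed index $i$ has $p_i(x)<0$ for all sufficiently large $x$. Hence its leading coefficient is negative. Similarly, some $p_j$ is negative on a half-line $(-\infty,-c)$.

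\textbf{Step 2 (build $q$).} If $i=j$, then $p_i$ has even degree and negative leading coefficient, so $\{p_i\ge0\}$ is compact and $q=p_i\in\QM(p)$ works. Otherwise, $p_i\to-\infty$ as $x\to+\infty$ and $p_j\to-\infty$ as $x\to-\infty$. Our candidate is
\[
q=(1+x^2)^{k}p_i+(1+x^2)^{l}p_j \in \QM(p),
\]
with even-degree multipliers chosen so that both terms have the same degree $2N$. This equalization is possible provided $\deg p_i\equiv \deg p_j \pmod 2$. When the parities differ, we first replace $p_i$ by $p_i(1+x^2)$ or use $x^2 p_i$; this preserves parity, so instead we handle the mismatch with the next step.

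\textbf{Step 3 (parity mismatch).} The leading coefficient of $q$ is $-(|c_i|+|c_j|)$ if both $p_i$ and $p_j$ have even degree. If $p_i$ has odd degree, its leading coefficient is negative; then $p_j$ must have odd degree with positive leading coefficient, or even degree with negative leading coefficient. In the mismatched case we raise the degree of the even-degree term above the odd-degree one, so that the even term dominates on both sides. On the side where the odd term is negative, the sum is still negative. This gives $q\to-\infty$ at both ends, so $\{q\ge0\}$ is compact.

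\textbf{Step 4 (conclude).} The criterion then gives $N-x\in\QM(p)$ and $N+x\in\QM(p)$ for some $N$, which is Archimedeanity. From this we get $R^2-x^2\in\QM(p)$ through the standard identity
\[
R^2-x^2=\tfrac{1}{2R}\bigl((R+x)^2(R-x)+(R-x)^2(R+x)\bigr),
\]
combined with the ring-closure arguments for the set of bounded elements.

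The main obstacle is Step 4. The degree bookkeeping in Steps 2 and 3 is elementary. What is not elementary is passing from a single $q$ with compact superlevel set to $R^2-x^2\in\QM(p)$ without preorderings. We would cite it directly: Jacobi–Prestel, or \cite[Thm.~6.1.?]{S24}, which states that a univariate quadratic module containing such a $q$ is Archimedean. Failing that, we would try to write $R^2-x^2=\sigma_0+\sigma q$ directly for large $R$, using that $R^2-x^2-\epsilon q$ is nonnegative.
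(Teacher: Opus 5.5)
\paragraph{Gap: the odd--odd case in Steps~2--3.} Reducing to a single $q\in\QM(p)$ with $\{q\geq 0\}$ compact is sound. The trouble is that your construction of $q$ fails in the only case that is not trivial.

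If either of your chosen $p_i,p_j$ has even degree, it is already negative near both $+\infty$ and $-\infty$: an eventually negative even-degree polynomial has negative leading coefficient. So it serves as $q$ by itself, and no combination is needed. What remains is the case where $p_i$ has odd degree and negative leading coefficient while $p_j$ has odd degree and positive leading coefficient. Your Step~3 names this case but then treats only the mismatched one.

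In this case the recipe $q=\lambda(1+x^2)^kp_i+\mu(1+x^2)^lp_j$ cannot work:
\begin{itemize}
\item If the two top coefficients do not cancel, $q$ has odd degree, so $\{q\geq0\}$ is unbounded.
\item If they do cancel, the next coefficient of $q$ is $\lambda u+\mu v$. Here $u,v$ are the subleading coefficients of $p_i,p_j$, and the ratio $\lambda/\mu$ is forced by the cancellation, because $(1+x^2)^k$ has no $x^{2k-1}$ term. So the sign of $q$ at infinity is dictated by the data, not by you.
\end{itemize}
Take $p=(1-x,1+x)$, i.e.\ $\K=[-1,1]$ with its natural generators. Your recipe gives either an odd-degree polynomial or a positive multiple of $(1+x^2)^k$, never a polynomial with compact superlevel set.

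\paragraph{The missing idea: a multiplier with a free linear shift.} Replace $p_i,p_j$ by $\alpha(1+x^2)^kp_i$ and $\beta(1+x^2)^lp_j$ with suitable $\alpha,\beta>0$. You may then assume $\deg p_i=\deg p_j=d$ odd, with $p_i=-x^d+ux^{d-1}+\cdots$ and $p_j=x^d+vx^{d-1}+\cdots$. Set $q:=(x+t)^2p_i+(x+s)^2p_j\in\QM(p)$. Its $x^{d+2}$ coefficient vanishes, and its $x^{d+1}$ coefficient is $u+v+2(s-t)$, which is negative once $t-s$ is large. So $q$ has even degree $d+1$ and negative leading coefficient. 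For $p=(1-x,1+x)$ with $t=1$, $s=-1$, this is exactly your own Step~4 identity, $q=2(1-x^2)$.

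\paragraph{Your ``main obstacle'' is actually easy.} The step you flag is elementary in one variable. First, $q$ is nonconstant, since $\K\neq\emptyset$. For $\lambda>0$ large, $N-x^2-\lambda q$ has even degree and positive leading coefficient. It is therefore nonnegative on $\R$ for $N$ large, hence a sum of squares. Thus $N-x^2\in\QM(p)$, and adding a nonnegative constant gives $R\geq 1$. More generally, $\SOS+\SOS q$ is the preordering generated by $q$, so Schm\"udgen's theorem applies directly; there is no quadratic-module-versus-preordering issue.

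With these two repairs you would have a self-contained elementary proof. The paper instead simply cites the univariate Archimedean theorem \cite[Thm.~7.1.2]{M08}, \cite[Prop.~5.5.14]{S24}.
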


\begin{proof}
	Since $K$ is a compact subset of the real line, we can use \cite[Thm.~7.1.2]{M08} or \cite[Prop.~5.5.14]{S24}, which establishes that $\QM(p)$ is Archimedean. The fact that $R\geq 1$ comes from \cite[Cor.~5.2.4]{M08}.
\end{proof}

\begin{lemma}[Markov-Luk\'acs]\label{lem:interval}
	Let $R>0$.
	If $q\in\R[x]$ is nonnegative on $[-R,R]$ and has degree $d$, then
	\begin{equation*}
  q\in \QM_{\lceil\tfrac{d}{2}\rceil}(R^2-x^2).
	\end{equation*}
\end{lemma}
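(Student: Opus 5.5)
The plan is to reduce to the classical Markov–Luk\'acs theorem on $[0,1]$ or $[-1,1]$ by an affine change of variable, and then to check the degree bookkeeping of the truncated quadratic module \eqref{eq:tqm}. Rescaling $x = Ry$ turns $q(x)$ into $\tilde q(y) := q(Ry)$, which is nonnegative on $[-1,1]$ with the same degree $d$. Also $R^2-x^2 = R^2(1-y^2)$. Since membership in $\QM_r$ with SOS multipliers is preserved under affine substitutions and positive scalings, it suffices to treat $R=1$.

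Next split on the parity of $d$. If $d=2k$, the Luk\'acs theorem gives $\tilde q = A^2 + (1-y^2)B^2$. Here $\deg A\le k$ and $\deg B\le k-1$; for a general sign pattern one takes sums of two squares, $\sigma_0 = A_1^2+A_2^2$ and $\sigma_1=B_1^2+B_2^2$. Then $\deg\sigma_0\le 2k$ and $\deg(\sigma_1(1-y^2))\le 2k$, so $\tilde q\in\QM_k(1-y^2)$ with $k=\lceil d/2\rceil$.

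If $d=2k+1$ is odd, Luk\'acs gives $\tilde q = (1+y)C + (1-y)D$ with $C,D$ SOS of degree at most $2k$. This representation does not use the generator $1-y^2$ directly, so we convert it. We use the identities
\[
1+y = \tfrac12(1+y)^2 + \tfrac12(1-y^2), \qquad 1-y = \tfrac12(1-y)^2+\tfrac12(1-y^2).
\]
Substituting gives
\[
\tilde q = \tfrac12\bigl[(1+y)^2C + (1-y)^2D\bigr] + \tfrac12(C+D)(1-y^2).
\]
The first bracket is SOS of degree at most $2k+2$. The second term has multiplier $C+D$, which is SOS, and total degree at most $2k+2$. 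Hence $\tilde q\in\QM_{k+1}(1-y^2)$, and $k+1=\lceil d/2\rceil$.

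The main obstacle is the odd-degree case: we need a certificate using only $1-y^2$, not the linear factors $1\pm y$, while losing at most one degree. The identities above achieve exactly that. A secondary point is to quote the Luk\'acs theorem with the sharp degree bounds; alternatively, one can derive it from a factorization of $\tilde q$ into linear and quadratic factors according to its roots in and outside $[-1,1]$. Undoing the scaling then yields the statement for general $R>0$.
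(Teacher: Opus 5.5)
Your proposal is correct and follows the paper's proof: you make the same reduction to $R=1$ and the same parity split, and in the odd case you use the same two identities $1\pm x=\tfrac12(1\pm x)^2+\tfrac12(1-x^2)$ to turn the representation $(1+x)C+(1-x)D$ into an element of $\QM_{(d+1)/2}(1-x^2)$. The only difference is that the paper derives the Luk\'acs representation, with its degree bounds, from the half-line P\'olya--Szeg\H{o} result via the Goursat transform. You instead cite the classical interval theorem directly, which is equally valid.
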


\begin{proof}
	This is the classical Markov-Luk\'acs theorem, see \cite[Sec.
			2]{PR00} for a modern account.
  
	{For $q=0$ the result is trivial. If $d=0$, then $q$ is a
    nonnegative constant and the result is immediate. Hence assume
    $q\neq0$ and $d\geq1$.} Up to scaling by $1/R$, one can assume that $R = 1$ without loss of generality.
	Let $\mathsf{G}_m(q)$ be the Goursat transform of $q$ of degree $m$,
		i.e. $\mathsf{G}_m(q)(x) = (1+x)^m q\left( \frac{1-x}{1+x}\right )$.
		Observe that $2^m q
			= \mathsf{G}_m\left(\mathsf{G}_m(q)\right )$. 

	By Goursat's lemma \cite[Lemma 1]{PR00}, $q$ is nonnegative on $[-1,
				1]$ if and only if $\mathsf{G}_d(q)$ is nonnegative on $[0, +\infty[$.
	Moreover, it holds that $\deg G_d(q) \leq d := \deg q$.
	By \cite[Prop. 2]{PR00}, if $\mathsf{G}_d(q)$ is nonnegative on $[0,
		+\infty[$, then there exist two sums of squares $s_0$ and $s_1$, with
	$\deg(s_0) \leq d$  and $\deg(s_1) \leq d-1$, such that $\mathsf{G}_d(q) = s_0 +
		x s_1$.
	Now, we perform a Goursat transform of degree $d$ on both sides of
		this identity. We obtain
		\begin{align*}
			2^d q & =(1+x)^d s_0\left ( \frac{1-x}{1+x}\right ) + (1+x)^d \left (\frac{1-x}{1+x}\right ) s_1\left(\frac{1-x}{1+x}\right ) \\
			      & =\mathsf{G}_d(s_0) + (1-x)(1+x)^{d-1}s_1\left(\frac{1-x}{1+x}\right ).
		\end{align*}
	Note that $\mathsf{G}_d(s_0)$ has degree at most $d$. Since $s_0$
		(resp. $s_1$) is a sum of squares, there exist $a_1, \ldots, a_u$ (resp.
		$b_1, \ldots, b_v$) such that $s_0 = a_1^2 + \cdots + a_u^2$ (resp. $s_1 =
			b_1^2+\cdots +b_v^2$). Recall
		that we established that $\deg s_0 \leq d$ and $\deg s_1 \leq d-1$.
		We deduce that $\deg a_i \leq \frac{d}{2}$ for all $1 \leq i \leq u$ and $\deg b_i \leq \frac{d-1}{2}$ for all $1 \leq i \leq v$.
		When $d$ is even, we deduce that
		\[
			\mathsf{G}_d(s_0) = \sum_{i=1}^u \mathsf{G}_{d/ 2}(a_i)^2
		\]
		and that
		\begin{align*}
			(1+x)^{d-1}s_1\left(\frac{1-x}{1+x}\right ) & = (1+x)^{d-1}\sum_{i=1}^v  b_i^2\left(\frac{1-x}{1+x}\right ) \\
			                                            & = (1+x) \sum_{i=1}^v  \mathsf{G}_{(d-2) / 2}(b_i)^2
		\end{align*}
		and we conclude that
		\[
			2^d q = \sum_{i=1}^{\revise u}\mathsf{G}_{d/2}(a_i)^2 + {(1-x^2)} \sum_{i=1}^v  \mathsf{G}_{(d-2)/2}(b_i)^2
		\]
		which establishes that \[q \in \QM_{{d/2}}(1-x^2).\] 	

	When $d$ is odd, we deduce that
		\[
			\mathsf{G}_d(s_0) = (1+x) \sum_{i=1}^u \mathsf{G}_{(d-1)/2}(a_i)^2
		\]
		and that
		\begin{align*}
			(1+x)^{d-1}s_1\left(\frac{1-x}{1+x}\right ) & = \sum_{i=1}^v  \mathsf{G}_{(d-1) / 2}(b_i)^2
		\end{align*}
		To summarize, we have
		\begin{equation}\label{eq:oddcase}
			2^d q  = (1+x)\sum_{i=1}^u \mathsf{G}_{(d-1)/2}(a_i)^{\revise 2} + (1-x)\sum_{i=1}^v \mathsf{G}_{(d-1)/2}(b_i)^{\revise 2}.
		\end{equation}
 {Using
\[
	1+x=\frac{(1+x)^2}{2}+\frac{1-x^2}{2},
	\qquad
	1-x=\frac{(1-x)^2}{2}+\frac{1-x^2}{2},
\]
in \eqref{eq:oddcase}, and observing that all resulting terms have
degree at most $d+1$,
we obtain
\[
	q\in\QM_{(d+1)/2}(1-x^2).
\]}	
	%
\end{proof}

\begin{lemma}[Natural generator representation]\label{lem:natgen}
	Let $g=(g_1,\ldots,g_m)$ be the vector of natural generators of $\K$, and let $g_0\equiv 1$. Then
	\[
		\{q\in\R[x]:q\geq 0\text{ on }\K\} = \QM(g) = \left\{\sum_{i=0}^m\sigma_i g_i, \ \sigma_i \in \SOS\right\}.
	\]
\end{lemma}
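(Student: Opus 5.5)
The inclusion $\QM(g)\subseteq\{q\geq 0 \text{ on } \K\}$ is immediate, since every $g_i$ is nonnegative on $\K$. For the converse I plan to factor $q$ and certify each factor separately, using the fact that $\QM(g)$ contains all products of the form $\sigma\prod_{i\in I}g_i$.

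The first step is to show that $\QM(g)$ is actually closed under multiplication, so that it equals the preordering $T(g)$. Let $\alpha$ and $\beta$ be consecutive endpoints, and let $g=(x-\alpha)(x-\beta)$ be a gap generator. For a product of two gap generators $g_ig_j$ with $i<j$, I will use the elementary identity writing $(x-b)(x-a)\,(x-c)(x-d)$, where $a\le b< c\le d$ come from two gaps, as a combination $\lambda_1 g_i+\lambda_2 g_j+\text{SOS}$. It is cleaner, however, to avoid the preordering route altogether and argue via the factorization of $q$, as follows.

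Let $q\geq 0$ on $\K$, and factor $q=c\prod(x-z)^{k_z}\prod(\text{irreducible quadratics})$ with $c$ a real constant. Irreducible quadratics are sums of squares, and even powers $(x-z)^{2k}$ are squares, so it remains to handle odd-multiplicity real roots. Any real root of odd multiplicity must lie outside the interior of $\K$, since otherwise $q$ would change sign there. Group the odd-multiplicity roots, together with the sign of $c$, into linear factors $x-z$ with $z\le a_1$, factors $z-x$ with $z\ge b_{m-1}$, and pairs of roots $z,z'$ lying in the same gap $[b_i,a_{i+1}]$. The pairing within a gap works because $q$ has the same sign on the intervals on both sides of the gap, so the number of odd-multiplicity roots in the gap is even. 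A root sitting exactly at an endpoint belongs to the gap it bounds.

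Next I certify each group. A factor $x-z$ with $z\le a_1$ equals $g_1+(a_1-z)\in\QM(g)$, and symmetrically $z-x=g_m+(z-b_{m-1})$. For a pair $z\le z'$ in the gap $[b_i,a_{i+1}]$, I need $(x-z)(x-z')\in\QM(g_1,g_{i+1},g_m)$. The difference $(x-z)(x-z')-g_{i+1}$ is linear and, when $(z,z')\neq(b_i,a_{i+1})$, nonnegative on the relevant region. I will write $(x-z)(x-z')=\lambda g_{i+1}+\ell(x)^2+\mu\,(\text{linear})$ and handle the leftover linear term using $g_1$ or $g_m$. This step is awkward.

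The main obstacle is multiplying these certified factors together, because $\QM(g)$ is not a priori multiplicative. To get around this, I plan instead to use Schmüdgen-free univariate theory directly: by \cite[\S 2.3]{KM02} (see also \cite[Thm.~6.1.x]{S24}), the natural generators are exactly the description for which the preordering and the quadratic module coincide and are saturated. The proof there proceeds by showing that every product $g_ig_j$ lies in $\QM(g)$. Concretely, I would verify that $g_1g_j$, $g_ig_m$ and $g_ig_j$ (for $i<j$) lie in $\QM(g)$. For instance,
\[ g_1 g_{j} = (x-a_1)(x-b)(x-a) \]
is handled by interpolation: take $\sigma_0,\sigma_1,\sigma_j$ of low degree, matching values at the points $a_1,b,a$, and check nonnegativity. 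This mirrors the explicit identity in the Example, where $g_1g_2$ was expressed with quadratic SOS multipliers. Once all pairwise products are in $\QM(g)$, an induction gives $\QM(g)=T(g)$. Finally, $T(g)$ is saturated: each product of the certified factors above is a sum of terms $\sigma\prod_{i\in I}g_i$, that is, an element of $T(g)$. I expect this pairwise-product verification to be the delicate part, and I would rely on the cited reference for it.
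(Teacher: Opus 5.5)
Your final architecture matches the paper's: $T(g)$, the preordering generated by the natural generators, is saturated, and $\QM(g)=T(g)$. Your reduction of the second part to $g_ig_j\in\QM(g)$ for $i<j$ is also correct. But that step carries the whole lemma, your proposal gives no argument for it, and the mechanism you sketch cannot work.

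\textbf{Gap 1: the equality $\QM(g)=T(g)$.} Your sketch never uses that $\K$ is bounded, and boundedness is exactly what makes $\QM(g)$ multiplicative. Take your proposed multipliers $\sigma_0,\sigma_1,\sigma_j$ only. Every nonzero term $\sigma_ig_i$ has a positive leading coefficient, so no leading terms can cancel. A representation of the cubic $g_1g_j=(x-a_1)(x-b)(x-a)$ therefore forces $\deg\sigma_0,\deg\sigma_1\leq 2$ and $\sigma_j$ constant. Evaluating at $x=b$ and $x=a$ (when $a_1<b$) then forces $\sigma_0=\sigma_1=0$, a contradiction. So $g_1g_j\notin\QM(g_1,g_j)$, and the generator $g_m$, which encodes boundedness, is indispensable. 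The same argument shows $x(x-1)(x-2)\in T(g)\setminus\QM(g)$ for the unbounded set $[0,1]\cup[2,3]\cup[4,\infty)$.

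Nor is there a uniform low-degree identity to find by interpolation. The paper's Example shows that on $[-1,0]\cup[1,N]$, $g_1g_2$ needs order $r_N\to\infty$. Already for $N=2$ the exact certificate has multipliers of degrees $6,4,6$; the lower-degree identities there only certify $g_1g_2+e_r$ with $e_r>0$. What you need is the theorem that $\QM(g)$ is closed under multiplication because $\K$ is compact \cite[Cor.~4.4]{S05}, \cite[Thm.~9.3.1]{M08}, \cite[Prop.~6.5.8]{S24}. That is the source the paper uses for the equality, rather than \cite[\S 2.3]{KM02}; from \cite[Thm.~2.2]{KM02} it takes only the saturation of the preordering.

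\textbf{Gap 2: isolated points in your saturation argument.} Your elementary proof that $T(g)$ is saturated (which the paper simply cites) works when every interval of $\K$ has nonempty interior. The pair step you found awkward is in fact immediate. For $b\leq z\leq z'\leq a$, $(x-z)(x-z')\geq0$ wherever $(x-b)(x-a)\geq0$. By the S-lemma, $(x-z)(x-z')-\lambda(x-b)(x-a)$ is then a nonnegative quadratic, hence a sum of squares, for some $\lambda\in[0,1]$.

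However, the setting allows isolated points $a_i=b_i$ (cf.\ the paper's example $\K=\{0\}$), and there your parity claim fails. Take $\K=\{0\}\cup[1,2]$ and $q=x^2(x-\tfrac12)$, which is nonnegative on $\K$. Its only odd-multiplicity root $\tfrac12$ sits alone in the gap $[0,1]$, because $q$ vanishes at the isolated point and gives no sign information there. To repair this you must allow borrowing a linear factor from an even power at an isolated point; here $q=g_1\cdot\bigl(\tfrac12x^2+\tfrac12g_2\bigr)$. You must also assign roots lying at isolated points by a sign count rather than by your fixed endpoint rule. For example, with $\K=\{0,1\}$ and $q=x(1-x)(x-\tfrac12)$, your rule puts three roots in the gap $[0,1]$.
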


\begin{proof}
	Every polynomial nonnegative on $\K$ belongs to the preordering generated
	by the natural generators $g_i$; this is the saturation result of
	\cite[Thm.~2.2]{KM02}, see also
	\cite[Prop.~2.7.3]{M08} and
	\cite[Prop.~6.1.3(a)]{S24}.
	Since $\K$ is bounded, the quadratic module $\QM(g)$ is closed
	under multiplication, and therefore coincides with this preordering, see \cite[Cor.~4.4]{S05},   \cite[Thm.~9.3.1]{M08}, and \cite[Prop.~6.5.8]{S24} applied to the affine line.
\end{proof}

\section{Upper Bound on Convergence Rate}

\subsection{Statement}

We consider the problem \eqref{eq:pop} for $n=1$:
\begin{equation*}
	f^* := \min_x f(x) \ \text{s.t.}\ x \in K:=\{x \in \R : p_i(x) \geq 0, \ i=1,\ldots,n_p\}
\end{equation*}
where $f$ and $p = (p_1, \ldots, p_{n_p})$ are in $\R[x]$.
Recall that we assume that $$\K=[a_1, b_1]\cup \cdots \cup [a_{m-1}, b_{m-1}]$$ is
bounded and non-empty.
\begin{theorem}[Quadratic convergence rate]
	\label{thm:main}
	There exist constants
	$C>0$ and $r_0\in\N$ such that
	\begin{equation}
		\label{eq:mainrate}
		0\leq e_r\leq \frac{C}{r^2},
		\qquad r\geq r_0.
	\end{equation}
\end{theorem}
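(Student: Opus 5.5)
The plan is to reduce the problem to the natural generators $g_1,\dots,g_m$, which can be handled with fixed degrees, and to pay a price of order $1/r^2$ only for approximating each $g_i$ inside the truncated quadratic module of the given description $p$.

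\emph{Step 1 (fixed-degree certificate in the natural generators).} The polynomial $f-f^*$ is nonnegative on $\K$, so Lemma~\ref{lem:natgen} gives $f-f^*=\sum_{i=0}^m\sigma_i g_i$ with $\sigma_i\in\SOS$. These $\sigma_i$ have some fixed degree $D$ depending only on the POP data. Therefore it suffices to show that for every $i$ there are constants $c_i$ and $s_i$ such that
\[
	g_i+\frac{c_i}{r^2}\in\QM_{r+s_i}(p)\qquad\text{for all large } r.
\]
Indeed, multiplying by $\sigma_i$ keeps us in a truncated module whose order is shifted by a constant. The extra term $\frac{c_i}{r^2}\sigma_i$ is absorbed by noting that $\sigma_i\le M$ on $[-R,R]$, with $R$ from Lemma~\ref{lem:archi}. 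Then $M-\sigma_i\in\QM_{D}(R^2-x^2)$ by Lemma~\ref{lem:interval}. In addition, $R^2-x^2$ has a fixed-degree certificate in $\QM(p)$ by Lemma~\ref{lem:archi}. Together these give $f-f^*+\frac{C}{r^2}\in\QM_{r+s}(p)$, hence $e_{r+s}\le C/r^2$, and after reindexing this is \eqref{eq:mainrate}.

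\emph{Step 2 (local structure at the endpoints).} Each root of a natural generator is an endpoint $a_j$ or $b_j$ of $\K$. At such an endpoint $c$, some constraint $p_k$ changes sign from nonnegative (on the $\K$ side) to negative (on the gap side), since otherwise $c$ would not be a boundary point. Hence near $c$ we have $p_k(x)=u(x)(x-c)^{\nu}$ (or $(c-x)^{\nu}$) with $\nu$ odd and $u(c)\neq0$. This multiplicity $\nu$ is the degeneracy that the abstract says affects only the constant. For $g_i$ of degree $\le2$, with roots at one or two endpoints, I will look for an SOS multiplier $t^2$ of degree $O(r)$ satisfying
\[
	g_i+\frac{c}{r^2}-t(x)^2p_k(x)\;\ge\;0\quad\text{on }[-R,R].
\]
For a gap generator $(x-b_j)(x-a_{j+1})$, I would use one constraint for each endpoint, plus a localized cutoff for each. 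Once this inequality holds, Lemma~\ref{lem:interval} puts the left-hand side in $\QM_{O(r)}(R^2-x^2)\subset\QM_{O(r)}(p)$, which completes Step 1 after rescaling $r$ by a constant factor.

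\emph{Step 3 (Chebyshev construction, the main obstacle).} The multiplier $t^2$ must approximate $g_i/p_k\sim (x-c)^{1-\nu}$ on the $\K$ side of $c$. This function blows up at $c$ when $\nu>1$, so the approximation can only be good at distance $\gtrsim\varepsilon$ from $c$. On the gap side, $t^2p_k\le0$, so the inequality there is harmless except where $g_i<0$, which is close to $c$ and bounded by $O(\varepsilon)$. Near $c$ on the $\K$ side, $g_i=O(\varepsilon)$, and we just need $t^2p_k$ not to overshoot.

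I would build $t$ by placing $c$ at an endpoint of a Chebyshev interval, after the change of variables $y=x-c$. Take $t$ to be a truncated, damped approximant of $y^{(1-\nu)/2}$ (or of a smoothed version saturating at scale $\varepsilon$), composed with a Chebyshev/Fej\'er-type kernel. The point is that Chebyshev polynomials of degree $r$ resolve scale $1/r^2$ near an interval endpoint, not $1/r$ as in the interior. Far from $c$, $t$ is multiplied by a polynomial cutoff $\approx1$ near $c$ and $\approx0$ elsewhere on $[-R,R]$, again of Chebyshev type. This makes $t^2p_k$ small wherever $p_k$ has the wrong sign or the wrong shape away from $c$, while $g_i+c/r^2$ stays positive there.

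This step is where I expect the real difficulty. It requires keeping the one-sided inequality valid globally with error exactly $O(1/r^2)$, uniformly in how degenerate $\nu$ is, while keeping $\deg t=O(r)$.
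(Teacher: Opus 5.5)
Your Step~1 is the paper's reduction, so the proof rests on showing $g_i+O(1/r^2)\in\QM_{O(r)}(p)$. There the proposal has a genuine gap.

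First, the sign analysis in Steps~2--3 is reversed. In $g_i+c/r^2-t^2p_k\geq0$, the $\K$ side of $c$ only imposes the upper bound $t^2p_k\leq g_i+c/r^2$, so $t$ may be small there. On the exterior side, $g_i$ is negative \emph{everywhere} and of order one, not only within $O(\varepsilon)$ of $c$: $g_1=x-a_1<0$ on all of $[-R,a_1)$, and a gap generator equals $-\bigl((a_{j+1}-b_j)/2\bigr)^2$ at the midpoint of its gap. So you need $t^2|p_k|\geq|g_i|-c/r^2$ on the whole set $\{g_i<0\}\cap[-R,R]$. Your localized cutoff, which makes $t^2p_k\approx0$ away from $c$ ``while $g_i+c/r^2$ stays positive'', therefore breaks the inequality, for instance at every gap midpoint.

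Second, removing the cutoff leaves a global patching problem, because the exterior is generally excluded jointly by several constraints, none negative on all of it. For $\K=[-3,-2]\cup[2,3]$ described by $p_1=(x+2)(x-1)$, $p_2=(x+1)(x-2)$, $p_3=9-x^2$, the constraints vanishing at $-2$ and $2$ are negative only on $(-2,1)$ and $(-1,2)$. Their multipliers must hand over to each other across the gap, and your plan does not address this.

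Third, the claim that $\nu$ is odd fails at isolated points of $\K$ (e.g.\ $p_1=-x^2$, $\K=\{0\}$). Finally, Step~3 is not carried out, and it is where the main difficulty lies, as you yourself note.

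The paper avoids all this with two ingredients your plan lacks. Lemma~\ref{lem:oddpower}, from the structure theory of finitely generated univariate quadratic modules, gives an odd power $g_i^{d_i}\in\QM(p)$. This single element has exactly the sign pattern of $g_i$ on all of $\R$, so no patching between constraints is needed. Lemma~\ref{lem:chebatom} is then applied in the variable $g_i/C$ with $C\geq\max_{[-R,R]}g_i$, giving $g_i+C\varepsilon_n=Cq_{n,d_i}(g_i/C)+g_i\,V_n(g_i/C)^{d_i-1}$. The last term equals $C^{1-d_i}W_n(g_i/C)^{d_i-1}\,g_i^{d_i}$, a square times $g_i^{d_i}$. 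The even power $V_n^{d_i-1}$ is at most $1$ on $\{0\leq g_i\leq C\}$ and at least $1$ on $\{g_i\leq -C\varepsilon_n\}$. This is exactly the two-sided requirement above, and it holds globally because the multiplier is a function of $g_i$ alone. Your intuition about $1/n^2$ resolution at an interval endpoint is correct. In the paper it is realized at the endpoint $0$ of the range $[0,1]$ of $g_i/C$ on the side where $g_i\geq0$, which treats both roots of a gap generator at once. The remainder $Cq_{n,d_i}(g_i/C)$ is nonnegative on $[-R,R]$. Lemma~\ref{lem:interval} and the fixed Archimedean certificate of Lemma~\ref{lem:archi} then place it in $\QM_{O(n)}(p)$.
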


The proof proceeds by separating the  {geometry of the feasible set}
from the {degree complexity of the given quadratic module}.
Let $q:=f-f^\star$ so that $q\geq 0$ on $\K$. As recalled by Lemma \ref{lem:natgen}, since we are in the univariate case, $q$ belongs to the quadratic module $\QM(g)$ of the natural generators of $\K$.
The difficulty is that the natural generators need not themselves
belong to the original quadratic module $\QM(p)$.

The proof overcomes this difficulty in two steps.

First, the structure theory of finitely generated univariate quadratic modules
shows that, for each natural generator $g_j$, some odd power belongs to the
original module, i.e. $g_i^{d_i}\in\QM(p)$ for $d_i$ odd.

Second, a Chebyshev polynomial construction converts such an odd-power
certificate into an approximate certificate for the generator itself:
for every $n\geq 1$, $g_i+C_i\varepsilon_n \in \QM_{O(n)}(p)$ with  $\varepsilon_n=O(n^{-2})$.
Substituting these approximate generators into the fixed natural generator
representation  gives $q+C\varepsilon_n \in \QM_{O(n)}(p)$.\\
Finally, choosing $n$ proportional to the relaxation order $r$ yields
$e_r=O(1/r^2)$.

\subsection{Proof}

Let
\[
	a
	=
	\operatorname{arcosh}3
	=
	\log\left(3+2\sqrt{2}\right)
	=
	2\log\left(1+\sqrt{2}\right)
	=
	2\operatorname{arsinh}1
	\approx
	1.76275
\]
and
\[
	\varepsilon_n
	=
	\frac{\cosh(a/n)-1}{2}
	=
	\sinh^2\!\left(\frac{a}{2n}\right)
	=
	\frac{1}{4}
	\left(
	\left(1+\sqrt{2}\right)^{1/n}
	-
	\left(\sqrt{2}-1\right)^{1/n}
	\right)^{2},
\]
for every integer $n\geq 1$: in particular
$\varepsilon_1=1$ and $\varepsilon_2=\frac{\sqrt{2}-1}{2}$.
Moreover
\[
	\lim_{n\to\infty}n^2\varepsilon_n
	=
	\frac{a^2}{4}
	=
	\log^2\left(1+\sqrt{2}\right)
	\approx
	0.77682.
\]

\begin{lemma}
	\label{lem:epsbound}
	For every $n\geq 1$, it holds $0<\varepsilon_n\leq \frac{1}{n^2}$.
\end{lemma}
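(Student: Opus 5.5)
Both inequalities follow from the representation $\varepsilon_n=\sinh^2(a/(2n))$. Positivity is immediate: $a>0$, so $a/(2n)>0$ and hence $\sinh(a/(2n))>0$.

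For the upper bound, we need $\sinh(a/(2n))\le 1/n$ for every $n\ge1$. Set $t=1/n\in(0,1]$ and consider
\[
\phi(t)=\sinh\!\left(\tfrac{a t}{2}\right)-t .
\]
The bound $\varepsilon_n\le 1/n^2$ is equivalent to $\phi(t)\le 0$ on $(0,1]$. We will argue by convexity of $\phi$ on $[0,1]$.

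Indeed, $\phi''(t)=\tfrac{a^2}{4}\sinh(at/2)\ge0$ for $t\ge0$, so $\phi$ is convex on $[0,1]$. A convex function on an interval lies below the chord joining its endpoint values, so $\phi\le\max\{\phi(0),\phi(1)\}$ on $[0,1]$. At the endpoints, $\phi(0)=0$, and
\[
\phi(1)=\sinh(a/2)-1=\sinh(\operatorname{arsinh}1)-1=0,
\]
using $a/2=\operatorname{arsinh}1$. Hence $\phi\le0$ on $[0,1]$, which gives $\sinh(a/(2n))\le 1/n$. Squaring, which is legitimate since both sides are positive, yields $\varepsilon_n\le 1/n^2$.

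This also shows that the bound is attained at $n=1$, consistent with $\varepsilon_1=1$. The only step requiring care is the identification $\sinh(a/2)=1$, which comes directly from $a=2\operatorname{arsinh}1$ as recorded above. No substantial obstacle is expected.
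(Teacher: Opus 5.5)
Your proof is correct, but it takes a different route from the paper's.

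The paper works directly with $\varepsilon_n=(\cosh(a/n)-1)/2$ and compares the Taylor series of $\cosh(a/n)-1$ term by term. Every term has degree $2k\geq 2$ and a nonnegative coefficient, so $n^{-2k}\leq n^{-2}$ gives $\cosh(a/n)-1\leq(\cosh a-1)/n^2=2/n^2$, using the calibration $\cosh a=3$.

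You instead pass to the half-angle form $\varepsilon_n=\sinh^2(a/(2n))$. Convexity of $\sinh$ on $[0,\infty)$ together with $\sinh 0=0$ gives the linear scaling $\sinh(a/(2n))\leq\sinh(a/2)/n=1/n$, and you then square. Your calibration $\sinh(a/2)=1$ is the same normalization as the paper's, since $\cosh a-1=2\sinh^2(a/2)$.

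The paper's series comparison produces the quadratic factor $n^{-2}$ in a single step, without an auxiliary function or a square root. Your argument avoids infinite series and needs only the sign of $\phi''$ plus two endpoint evaluations, at the cost of a final squaring step. Both arguments work verbatim for real $n\geq 1$, and both give equality at $n=1$.
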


\begin{proof}
	Because $a>0$, it holds
	\[
		0 < \cosh(a/n)-1
		=
		\sum_{k\geq 1}\frac{a^{2k}}{(2k)!n^{2k}}
		\leq
		\frac{1}{n^2}
		\sum_{k\geq 1}\frac{a^{2k}}{(2k)!}
		=
		\frac{\cosh a-1}{n^2}
		=
		\frac{2}{n^2}.
	\]
\end{proof}

Let $T_n$ be the Chebyshev polynomial of the first kind. Recall that these are defined by the recurrence $T_{n+1}(t) = 2t T_n(t)-T_{n-1}(t)$, initialized with $T_0(t)=1$ and $T_1(t)=x$. We also have $T_n(\cos\theta)=\cos(n\theta)$.
Define
\[
	V_n(t)
	:=
	\frac{1-T_n(1-2t)}{2}.
\]
Since $V_n(0)=0$, the quotient
\[
	W_n(t):=\frac{V_n(t)}{t}
\]
is a polynomial of degree $n-1$.

\begin{lemma}[Odd-multiplicity desingularization]
	\label{lem:chebatom}
	Let $d\geq 1$ be odd and define
	\begin{equation}
		\label{eq:qnnu}
		q_{n,d}(t)
		:=
		\varepsilon_n
		+
		t(1-V_n(t)^{d-1}).
	\end{equation}
	Then
	\begin{equation}
		\label{eq:qnnonneg}
		q_{n,d}(t)\geq 0
		\qquad\text{for every }t\leq 1,
	\end{equation}
	and
	\begin{equation}
		\label{eq:chebidentity}
		t+\varepsilon_n
		=
		q_{n,d}(t)
		+
		t^d W_n(t)^{d-1}.
	\end{equation}
	Moreover,
	\begin{equation}
		\label{eq:chebdegree}
		\deg q_{n,d}
		\leq 1+n(d-1),
		\qquad
		\deg(t^d W_n^{d-1})
		\leq 1+n(d-1).
	\end{equation}
\end{lemma}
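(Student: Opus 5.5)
The plan is to verify the three claims directly. The identity and the degree bounds are pure bookkeeping. The only real content is the sign condition \eqref{eq:qnnonneg} for $t<0$, and this is exactly where the specific value $a=\operatorname{arcosh}3$ in the definition of $\varepsilon_n$ enters.

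\emph{Identity and degrees.} Since $V_n(t)=tW_n(t)$, we have $t^dW_n(t)^{d-1}=t\cdot(tW_n(t))^{d-1}=tV_n(t)^{d-1}$. Then \eqref{eq:chebidentity} reads
\[
t+\varepsilon_n=\varepsilon_n+t-tV_n^{d-1}+tV_n^{d-1},
\]
which is trivial. Because $\deg T_n=n$, we get $\deg V_n=n$, and hence $\deg(tV_n^{d-1})=1+n(d-1)$. This gives both bounds in \eqref{eq:chebdegree}, since $q_{n,d}=\varepsilon_n+t-tV_n^{d-1}$ and $n(d-1)\ge 0$.

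\emph{Nonnegativity on $[0,1]$.} If $d=1$, then $q_{n,1}=\varepsilon_n>0$ and there is nothing to prove, so assume $d\ge3$; then $d-1$ is even and positive. For $t\in[0,1]$ we have $1-2t\in[-1,1]$, so $T_n(1-2t)\in[-1,1]$ and $V_n(t)\in[0,1]$. Hence $V_n^{d-1}\le1$, so $t(1-V_n^{d-1})\ge0$ and $q_{n,d}(t)\ge\varepsilon_n>0$.

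\emph{Nonnegativity for $t<0$ (the main step).} Write $1-2t=\cosh\theta$ with $\theta>0$, so that $T_n(1-2t)=\cosh(n\theta)>1$. Then $V_n(t)<0$, and since $d-1$ is even, $V_n^{d-1}=|V_n|^{d-1}$. We split into two cases.
\begin{itemize}
\item[(i)] If $|V_n(t)|\ge1$, then $1-V_n^{d-1}\le0$. Since also $t<0$, the product $t(1-V_n^{d-1})$ is nonnegative, and $q_{n,d}(t)\ge\varepsilon_n$.
\item[(ii)] If $|V_n(t)|<1$, then $0<1-V_n^{d-1}\le1$, so $t(1-V_n^{d-1})\ge t$ and $q_{n,d}(t)\ge\varepsilon_n+t$. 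It therefore suffices to show $t\ge-\varepsilon_n$.
\end{itemize}
For case (ii), the condition $|V_n|<1$ means $\cosh(n\theta)=T_n(1-2t)<3=\cosh a$, that is $\theta<a/n$. This gives $1-2t=\cosh\theta<\cosh(a/n)$, so
\[
t>\frac{1-\cosh(a/n)}{2}=-\varepsilon_n,
\]
as required. This threshold computation is the key point: $\varepsilon_n$ was chosen precisely so that the region where $|V_n|<1$ on the negative axis is $(-\varepsilon_n,0)$.
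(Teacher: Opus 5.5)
Your proof is correct and follows the paper's argument essentially step for step. It uses $V_n=tW_n$ for the identity and the degrees, and $V_n\in[0,1]$ on $[0,1]$. For $t<0$ it splits into $V_n\le -1$ versus $-1<V_n<0$, where the threshold $T_n(1-2t)<3=\cosh a$ forces $t>-\varepsilon_n$. Treating $d=1$ separately is a small improvement: the paper's step $0<V_n^{d-1}<1$ is literally false when $d-1=0$, even though the conclusion $q_{n,1}\equiv\varepsilon_n$ still holds.
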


\begin{proof}
	Since $V_n(t)=tW_n(t)$, it holds that
	$t^d W_n(t)^{d-1}=tV_n(t)^{d-1}$ and
	\eqref{eq:chebidentity} follows immediately from
	\eqref{eq:qnnu}.
	It remains to prove \eqref{eq:qnnonneg}, i.e. $q_{n,d}(t)\geq 0$ for every $t\leq 1$.

	First let $0\leq t\leq 1$.  Then $1-2t\in[-1,1]$, so
	$-1\leq T_n(1-2t)\leq 1$ and hence $0\leq V_n(t)\leq 1$,
	which implies $1-V_n(t)^{d-1}\geq 0$,
	and therefore $q_{n,d}(t)\geq \varepsilon_n>0$.

	Second, let $t<0$ and set $z:=1-2t>1$.\\
	For $z>1$, $T_n(z)= \cosh(n\operatorname{arcosh}z)>1$,
	so that $$V_n(t)=(1-T_n(z))/2<0.$$
	We investigate the two complementary cases
	$V_n(t) \leq -1$ and $-1 < V_n(t) < 0$.

	If $V_n(t)\leq -1$, then $V_n(t)^{d-1}\geq 1$ because $d-1$ is even,
	so that $1-V_n(t)^{d-1}\leq 0$.
	As $t<0$, $t(1-V_n(t)^{d-1})\geq 0$,
	hence again $q_{n,d}(t)\geq\varepsilon_n$.

	Suppose finally that $-1 < V_n(t)<0$.  Then, since $d-1$ is even, it holds that
	$$0 < V_n(t)^{d-1} < 1 \quad \text{ and } \quad
		T_n(z)=1-2V_n(t)\leq 3=\cosh a.$$
	Because $\cosh$ is increasing on $[0,\infty)$, $\operatorname{arcosh}z\leq \frac{a}{n}$.
	Thus $z\leq \cosh(a/n)$,
	and consequently $$|t| =(z-1)/2 \leq (\cosh(a/n)-1)/2:=\varepsilon_n.$$
	Therefore $q_{n,d}(t)=\varepsilon_n-|t|(1-V_n(t)^{d-1}) \geq \varepsilon_n-|t| \geq 0$.
	This ends the proof of \eqref{eq:qnnonneg}.

	Finally, note that, by definition of $V_n$, it holds that $\deg V_n=\deg T_n
		= n$ and $\deg W_n=\left (\deg V_n \right )- 1 = n-1$, which establishes
	\eqref{eq:chebdegree}.
\end{proof}

\begin{figure}[htbp]
	\centering
	\begin{tabular}{cc}
		\includegraphics[width=.48\textwidth]{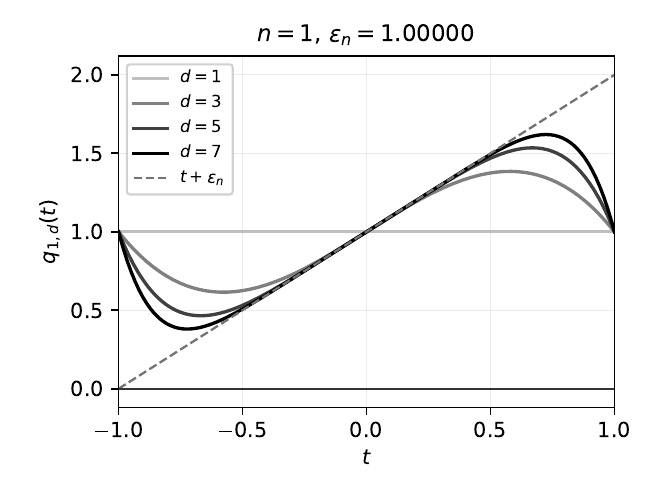} &
		\includegraphics[width=.48\textwidth]{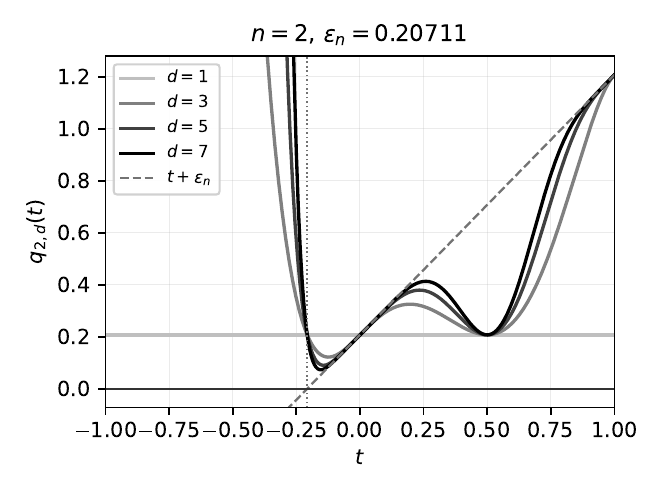}   \\
		\includegraphics[width=.48\textwidth]{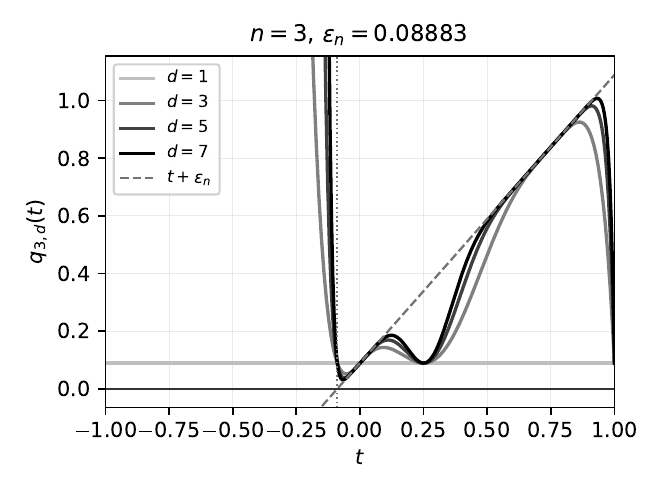} &
		\includegraphics[width=.48\textwidth]{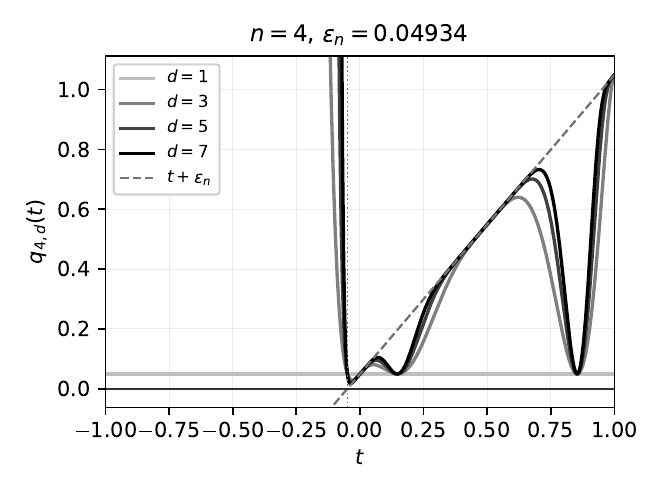}
	\end{tabular}
	\caption{Graphs of the polynomials $q_{n,d}$ of
		Lemma~\ref{lem:chebatom} on $(-1,1)$, for $n=1,2,3,4$ and odd
		exponents $d=1,3,5,7$. On $[0,1]$ each graph oscillates between the
		constant floor $\varepsilon_n$, attained where $V_n(t)=1$, and the
		line $t+\varepsilon_n$ (dashed), attained where $V_n(t)=0$. On the
		interval $(-\varepsilon_n,0)$, delimited by the dotted vertical line
		at $t=-\varepsilon_n$, the graphs dip below $\varepsilon_n$ while
		remaining bounded from below by $t+\varepsilon_n\geq 0$, in
		accordance with the nonnegativity certificate \eqref{eq:qnnonneg}.
		For $t<-\varepsilon_n$ one has $V_n(t)<-1$, so the polynomials
		increase very steeply and leave the plotted range. The degenerate
		case $d=1$ yields the constant $q_{n,1}\equiv\varepsilon_n$.}
	\label{fig:qnd}
\end{figure}

Recall that $p = (p_1, \ldots, p_{n_p})$ is the vector of polynomials whose
simultaneous non-negativity defines $$\K = [a_1, b_1]\cup \cdots \cup [a_{m-1},
		b_{m-1}] .$$
Recall that we assume $\K$ be bounded and non-empty.

\begin{lemma}[Odd powers of the natural generators]
	\label{lem:oddpower}
	For every $i=1,\ldots,m$, there exists an odd integer
	$d_i\geq 1$ such that
	\begin{equation*}
		g_i^{d_i}\in\QM(p).
	\end{equation*}
\end{lemma}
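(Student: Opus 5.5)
The plan is to reduce the membership $g_i^{d_i}\in\QM(p)$ to a finite number of purely local conditions at the endpoints of $\K$. For this I will use the local--global principle for Archimedean quadratic modules on curves, due to Scheiderer; in the univariate case it is available e.g.\ in \cite[Ch.~9]{M08} and \cite[Ch.~6]{S24}. The module $\QM(p)$ is Archimedean by Lemma~\ref{lem:archi}. In this setting the principle reads as follows: a polynomial $h\geq 0$ on $\K$ with finitely many zeros on $\K$ belongs to $\QM(p)$ as soon as, at every zero $c\in\K$ of $h$, the germ of $h$ lies in the quadratic module generated by $p$ in the local ring $\R[[x-c]]$. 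Zeros of $h$ outside $\K$ impose no condition. Since $g_i$ vanishes on $\K$ only at the two endpoints $b_{i-1},a_i$ (or the single endpoint $a_1$, resp.\ $b_{m-1}$), and is strictly positive elsewhere on $\K$, the same holds for every odd power $g_i^{d}$. So only finitely many local checks are needed, and each one will impose a lower bound on $d$.

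The local computation comes next. Fix an endpoint $c$ of $\K$, and let $t$ denote the local coordinate pointing out of $\K$, i.e.\ $t=x-c$ at a right endpoint $c=b_j$ and $t=c-x$ at a left endpoint $c=a_j$. Near $c$ the set $\K$ is locally $\{t\leq 0\}$ (or just $\{c\}$ if $c$ is isolated). Hence some $p_k$ must be negative for small $t>0$. Write this $p_k$ as $p_k=\pm t^{\ell}u$ with $u(0)>0$. It changes sign at $c$ and is $\geq 0$ for small $t<0$, so $\ell$ is odd and in fact $p_k=-t^{\ell}u$.

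In $\R[[t]]$ every unit with positive constant term is a square. Therefore $-t^{\ell}$ lies in the local module, and so does $-t^{\ell+2s}=t^{2s}(-t^{\ell})$ for every $s\geq 0$. Thus $-t^{e}$ is in the local module for every odd $e\geq \ell_c$, where $\ell_c$ is the minimal such odd order at $c$.

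On the other hand, $g_i$ vanishes to first order at each of its endpoints and is positive on $\K$ nearby. Hence $g_i=-t\,v$ with $v(0)>0$ there, and $g_i^{d}=-t^{d}v^{d}$ with $v^{d}$ a positive unit, hence a square. The conclusion is that $g_i^{d}$ lies in the local module at $c$ whenever $d$ is odd and $d\geq \ell_c$.

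Choosing $d_i$ odd with $d_i\geq \max(\ell_c)$ over the (at most two) endpoints of $g_i$ then satisfies all local conditions, and the local--global principle gives $g_i^{d_i}\in\QM(p)$.

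I expect the main obstacle to be stating the local--global principle in exactly the form needed and checking its hypotheses. Its hypotheses are the Archimedean property, the finiteness of the zero set on $\K$, and the correct notion of local module, namely germs in the completion versus germs of polynomials. The isolated-point case $a_j=b_j$ also needs separate care. There $\K$ is locally a point, and the local module must contain the germs of both $g_j$ and $g_{j+1}$, each of which vanishes to first order at $c$ but with the opposite orientation. I would handle this by running the same argument on each side separately: some $p_k$ is negative on the right of $c$, and some $p_{k'}$ is negative on the left. If one generator happens to serve both sides, it has even order there, and I would then use $t^{2}$ times squares to adjust the parity, which is exactly where the oddness of $d_i$ must be checked carefully.
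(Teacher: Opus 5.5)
Your proof is correct in outline but takes a different route from the paper. The paper does no local analysis. It quotes Augustin's description of finitely generated univariate quadratic modules by generalized natural generators \cite[Cor.~4.6]{A12}, which directly places $(x-a_1)^{d_1}$, $(b_{m-1}-x)^{d_m}$ and, for each gap, $(x-b_i)^{\beta_i}(x-a_{i+1})^{\alpha_{i+1}}$ with odd exponents in $\QM(p)$. The only remaining step is to raise $\beta_i$ and $\alpha_{i+1}$ to their (odd) maximum by multiplying with a square. You instead obtain these memberships from Scheiderer's local--global principle for the Archimedean module $\QM(p)$, together with an explicit computation in $\R[[t]]$. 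Your route explains where the odd exponents come from: they are the vanishing orders of a constraint that turns negative just outside $\K$. It also gives explicit admissible $d_i$, and it uses a tool not tied to one variable. The price is dependence on a heavier theorem whose exact form you must cite: the version for quadratic modules rather than preorderings, with local conditions in the completion. In one variable this form is legitimate, since every zero $c$ of $h$ is isolated and $\R[[x-c]]/(h^2)$ coincides with the localization of $\R[x]$ at $c$ modulo $h^2$.

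One step must be repaired. Consider an isolated point of $\K$ where the constraint that is negative on one side has even order $\ell$, as for $p_1=-x^2$. Multiplying $-t^{\ell}$ by $t^2$, or by any square, never changes the parity of the order, so it cannot produce $\pm t^{e}$ with $e$ odd. Use instead
\[
\pm t^{\ell+1}=t^{\ell}(1\pm t)+(-t^{\ell}),
\]
where $t^{\ell}(1\pm t)$ is a square in $\R[[t]]$ because $\ell$ is even and $1\pm t$ is a unit with positive constant term. Then $\pm t^{e}=t^{e-1}(1\pm t)+t^{e-1-\ell}(-t^{\ell})$ lies in the local module for every odd $e\geq \ell+1$. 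This covers both natural generators vanishing at that point. It is the local version of the identity $x^3=\bigl(x(x+1)/2\bigr)^2+\bigl((x-1)/2\bigr)^2(-x^2)$ in the paper's second example.
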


\begin{proof}
	We repeatedly use the generalized natural-generator description of a finitely
	generated univariate quadratic module with nonempty bounded positivity
	set. By \cite[Cor.~4.6]{A12}, together with
	\cite[Defs.~4.1 and~4.4]{A12} (which we can apply because we assume
		that $\K$ is bounded and non-empty), every generalized natural generator
	belongs to $\QM(p)$, see also \cite[Thm.~2.26 and Cor.~2.28]{A08}.

	At the left boundary, \cite[Cor.~4.6]{A12} establishes that
	there exists an odd integer \(d_1\geq1\)
	such that   $g_1^{d_1}(x)=(x-a_1)^{d_1}$
	belongs to \(\QM(p)\).

	For each gap \((b_i,a_{i+1})\), \(i=1,\ldots,m-2\), using again
	\cite[Cor.~4.6]{A12}, we deduce that there exist odd integers
	\(\beta_i,\alpha_{i+1}\geq1\) such that $h_{i+1}(x) :=
		(x-b_i)^{\beta_i}(x-a_{i+1})^{\alpha_{i+1}} \in\QM(p)$. Set
	$d_{i+1}:=\max\{\beta_i,\alpha_{i+1}\}$. Since \(\beta_i\) and
	\(\alpha_{i+1}\) are odd, \(d_{i+1}\) is odd, and both $d_{i+1}-\beta_i$ and
	$d_{i+1}-\alpha_{i+1}$ are nonnegative even integers. Consequently,
	\[
		\begin{aligned}
			g_{i+1}(x)^{d_{i+1}}
			 & =
			\bigl((x-b_i)(x-a_{i+1})\bigr)^{d_{i+1}} \\
			 & =
			h_{i+1}(x)
			\left(
			(x-b_i)^{(d_{i+1}-\beta_i)/2}
			(x-a_{i+1})^{(d_{i+1}-\alpha_{i+1})/2}
			\right)^2.
		\end{aligned}
	\]
	A quadratic module is closed under multiplication by polynomial
	squares. Hence $g_{i+1}^{d_{i+1}}\in\QM(p)$, for $i=1,\ldots,m-2$.

	Finally, at the right boundary, \cite[Cor.~4.6]{A12} shows that there exists
	an odd integer \(d_m\geq1\) such that
	$g_m^{d_m}(x)=(b_{m-1}-x)^{d_m}\in\QM(p)$.
\end{proof}

\begin{example}
	Consider $p_1(x)=(1-x)^3(1+x)$,
	whose positivity set is $K=[-1,1]$.
	The natural generators are $g_1(x)=1+x$, $g_2(x)=1-x$. It holds
	\[
		g_1
		=
		\frac18g_1^2((g_2+1)^2+3)
		+\frac18p_1 \text{ which lies in }\QM(p)
	\]
	so \(d_1=1\).
	For the other natural generator,
	\[
		g_2^3
		=
		\frac12g_2^4+\frac12p_1 \text{ which also lies in }\QM(p)
	\]
	so \(d_2=3\).
	Moreover, \(g_2\notin\QM(p)\). Indeed, suppose that
	$g_2=\sigma_0+\sigma_1p_1$, $\sigma_0,\sigma_1\in\Sigma[x]$.
	Evaluating at \(x=1\) gives \(\sigma_0(1)=0\). Since \(\sigma_0\) is a
	sum of squares, its zero at \(x=1\) has even multiplicity, and hence
	$\sigma_0'(1)=0$. Since \(p_1\) has a zero of multiplicity 3 at \(x=1\), one also has $p_1(1)=p_1'(1)=0$. Differentiating the assumed identity at \(x=1\) would therefore give $g_2'(1)=0$, contrary to \(g_2'(1)=-1\). Thus \(g_2\notin\QM(p)\), and the smallest possible odd exponent is \(d_2=3\).
    The failure of $g_2$ to lie in $\QM(p)$ is an instance of the criterion
	\cite[Prop.~6.5.16]{S24}: on a non-singular affine curve, a finitely
	generated preordering with compact positivity set is saturated if and only if
	some generator vanishes to order exactly one at every boundary point, which
	fails here because $p_1$ vanishes at order $3$.
\end{example}

\begin{example}
	Consider $p_1(x)=-x^2$, whose positivity set is the singleton
	$K=\{0\}$.
	The natural generators  are $g_1(x)=x$, $g_2(x)=-x$.
	It holds that
	\[
		g_1^3=x^3
		=
		\left(\frac{x(x+1)}2\right)^2
		+
		\left(\frac{x-1}{2}\right)^2(-x^2) \in\QM(p)
	\]
	so $d_1=3$.
	Similarly,
	\[
		g_2^3=-x^3
		=
		\left(\frac{x(x-1)}2\right)^2
		+
		\left(\frac{x+1}{2}\right)^2(-x^2) \in\QM(p)
	\]
	so $d_2=3$.\\
	Neither \(x\) nor \(-x\) belongs to \(\QM(-x^2)\). For instance, if $x=\sigma_0-\sigma_1x^2$,
	$\sigma_0,\sigma_1\in\Sigma[x]$,
	then evaluation at \(x=0\) gives \(\sigma_0(0)=0\). Since \(\sigma_0\)
	is a sum of squares, this implies $\sigma_0'(0)=0$.
	Differentiating the identity at \(x=0\) would yield \(1=0\), a
	contradiction. The same argument applies to \(-x\). Again \cite[Prop.~6.5.16]{S24} predicts the failure: at an isolated
	point of the positivity set, saturation requires two generators of order one
	whose product is nonpositive nearby, whereas here $p_1$ vanishes at the second order at zero.
\end{example}

\begin{lemma}[Approximate recovery of a natural generator]
	\label{lem:recover}
	Let $g\in\R[x]$ be such that
	\begin{equation}
		\label{eq:gpowerM}
		g^d\in\QM(p)
	\end{equation}
	for some odd integer $d\geq 1$.
	Then there exist constants
	$A>0$, $B\geq 0$, $C>0$
	depending only on $g$, $p$, and $d$, such that
	\begin{equation*}
		g+C\varepsilon_n\in\QM_{An+B}(p)
	\end{equation*}
	for every $n\geq 1$.
\end{lemma}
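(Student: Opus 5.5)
Since $\K$ is bounded, $g$ is bounded on the compact set $[-R,R]$ given by Lemma~\ref{lem:archi}. We rescale $g$ so that Lemma~\ref{lem:chebatom} applies. Fix $M\geq 1$ with $|g(x)|\leq M$ for $|x|\leq R$, and set $t:=g(x)/M$, so that $t\leq 1$ on $[-R,R]$. Substituting $t=g/M$ into the identity \eqref{eq:chebidentity} and multiplying by $M$ gives
\[
	g+M\varepsilon_n = M\,q_{n,d}(g/M) + M^{1-d}\, g^d\, W_n(g/M)^{d-1}.
\]
We then place each of the two summands on the right in a truncated quadratic module of order linear in $n$. This gives the claim with $C:=M$.

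For the second summand, \eqref{eq:gpowerM} gives a fixed representation $g^d=\sum_i\sigma_i p_i$ with $\deg(\sigma_i p_i)\leq 2r_1$ for some fixed $r_1$. Since $d-1$ is even, $W_n(g/M)^{d-1}=\bigl(W_n(g/M)^{(d-1)/2}\bigr)^2$ is a square. Multiplying each $\sigma_i$ by it keeps them SOS and raises degrees by at most $(n-1)(d-1)\deg g$. Hence this term lies in $\QM_{r}(p)$ with $r\leq r_1+\lceil (n-1)(d-1)\deg g/2\rceil$, which is linear in $n$.

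For the first summand, the polynomial $Q_n(x):=M q_{n,d}(g(x)/M)$ has degree at most $(1+n(d-1))\deg g$, and by \eqref{eq:qnnonneg} it is nonnegative on $[-R,R]$. Lemma~\ref{lem:interval} then gives $Q_n\in\QM_{s}(R^2-x^2)$ with $s=\lceil \deg Q_n/2\rceil$. By Lemma~\ref{lem:archi}, $R^2-x^2\in\QM(p)$, so we fix a representation $R^2-x^2=\sum_i\tau_ip_i$ with degrees bounded by $2r_2$. Writing $Q_n=s_0+s_1(R^2-x^2)$ with $\deg s_1\leq 2s-2$ and expanding, we obtain $Q_n=s_0+\sum_i (s_1\tau_i)p_i\in\QM_{s+r_2}(p)$. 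Again the order is linear in $n$.

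Adding the two certificates yields $g+M\varepsilon_n\in\QM_{An+B}(p)$, with $A$ and $B$ read off from the bounds above and depending only on $g$, $p$, $d$ (through $\deg g$, $M$, $R$, $r_1$, $r_2$). The main obstacle is the degree bookkeeping in the interval step: the certificate for $Q_n$ must be produced in $\QM(R^2-x^2)$ with a degree bound linear in $\deg Q_n$, and then transported into $\QM(p)$ at only an additive cost. Lemma~\ref{lem:interval} gives exactly this, because the degree bound $\lceil d/2\rceil$ is sharp and composing with the fixed Archimedean certificate only adds a constant $r_2$. A minor point to check is that the rescaling by $M$ does not affect the sign conditions: we need $t\leq1$, which holds, and nonnegativity of $q_{n,d}$ is asserted for all $t\leq 1$, including large negative $t$.
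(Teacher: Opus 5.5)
Your proof is correct and follows the paper's argument essentially step by step. You use the same rescaled Chebyshev identity \eqref{eq:recoveridentity}. You handle the first term with the Markov--Luk\'acs certificate, transported into $\QM(p)$ through a fixed Archimedean certificate at an additive cost in the order. You handle the second term by multiplying a fixed certificate of $g^d$ by the square $W_n(g/M)^{d-1}$. The only cosmetic difference is that you bound $|g|$ by $M\geq 1$ on $[-R,R]$, whereas the paper only uses an upper bound $g\leq C$ there, which is all that \eqref{eq:qnnonneg} requires.
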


\begin{proof}
	Let $I=[-R,R]$, \text{where $R\geq 1$} is fixed by the Archimedean
	certificate of Lemma \ref{lem:archi}.  Since
	$I$ is compact and $g$ is a polynomial, there exists $C>0$ such that
	\begin{equation}
		\label{eq:Lchoice}
		g(x)\leq C
		\qquad\text{for every }x\in I.
	\end{equation}
	We apply Lemma~\ref{lem:chebatom} with $t=g(x)/C$.
	Multiplying \eqref{eq:chebidentity} by $C$ gives
	\begin{equation}
		\label{eq:recoveridentity}
		g+C\varepsilon_n
		=
		Cq_{n,d}(g/C) +
		C^{1-d}
		g^d
		W_n(g/C)^{d-1}.
	\end{equation}

	By \eqref{eq:Lchoice}, $g(x)/C\leq 1$ for every $x\in I$. Then, we deduce by
	\eqref{eq:qnnonneg} that $$Cq_{n,d}(g(x)/C)\geq 0 \quad \text{ for every } \quad
		x\in I.$$ Hence, by Lemma~\ref{lem:interval}
	(which we can apply since $R\geq 1$),
	\begin{equation*}
		Cq_{n,d}(g/C)\in\QM(R^2-x^2) \subset \QM(p) \quad
		\text{ using again Lemma~\ref{lem:archi}}.
	\end{equation*}

	For the second term, $d-1$ is even, so we can write
	$$C^{1-d}W_n(g/C)^{d-1}
		=
		(C^{(1-d)/2}
		W_n(g/C)^{(d-1)/2})^2.$$
	Thus \eqref{eq:gpowerM} and the closure of $\QM(p)$ under multiplication by
	squares imply that
	\begin{equation}
		\label{eq:secondtermM}
		C^{1-d}
		g^d W_n(g/C)^{d-1}\in\QM(p).
	\end{equation}
	Equations \eqref{eq:recoveridentity}-\eqref{eq:secondtermM} prove
	that $g  + C \varepsilon_n \in \QM(p)$.

	%
	%

	It remains to bound the degree of the resulting certificate in
	\(\QM(p)\).

	By \eqref{eq:chebdegree}, it holds that
	$$\deg q_{n,d}(g/C)
		\leq (\deg g)(1+n(d-1)).$$
	Since \(Cq_{n,d}(g/C)\) is nonnegative on \([-R,R]\) and \text{$R\geq 1$},
	Lemma~\ref{lem:interval} yields a representation
 {   \[
Cq_{n,d}(g/C)
\in
\QM_{s_n}(R^2-x^2),
\qquad
s_n:=
\left\lceil
\frac{(\deg g)(1+n(d-1))}{2}
\right\rceil.
\]
Thus there exist
\(\sigma_{0,n},\sigma_{1,n}\in\SOS\) such that
\[
Cq_{n,d}(g/C)
=
\sigma_{0,n}
+
\sigma_{1,n}(R^2-x^2),
\]
with
\[
\deg \sigma_{0,n}\leq 2s_n,
\qquad
\deg\sigma_{1,n}\leq 2s_n-2.
\]}
{Fix once and for all an Archimedean certificate
\[
R^2-x^2
=
\sum_{i=0}^{n_p}\tau_i p_i,
\qquad
\tau_i\in\SOS,
\]
and choose \(r_R\in\N\) such that
\[
R^2-x^2\in\QM_{r_R}(p).
\]
Equivalently,
\[
\deg(\tau_i p_i)\leq 2r_R,
\qquad i=0,\ldots,n_p.
\]
Substituting this fixed certificate into the preceding representation gives
\[
Cq_{n,d}(g/C)
=
\sigma_{0,n}
+
\sum_{i=0}^{n_p}
(\sigma_{1,n}\tau_i)p_i.
\]}
  Since a product of sums of squares is again a sum of squares, this is a
certificate in \(\QM(p)\). {Moreover,
\[
\deg\bigl((\sigma_{1,n}\tau_i)p_i\bigr)
\leq
2s_n-2+2r_R
\leq
2(s_n+r_R).
\]
Consequently,
\[
Cq_{n,d}(g/C)
\in
\QM_{s_n+r_R}(p).
\]
Since
\[
s_n
=
\left\lceil
\frac{\deg g+\deg g(d-1)n}{2}
\right\rceil
\leq
\left\lceil\frac{\deg g}{2}\right\rceil
+
n\left\lceil\frac{\deg g(d-1)}2\right\rceil,
\]
the order of this first certificate is bounded by an affine function of
\(n\).}  

{We now consider the second term in \eqref{eq:recoveridentity}.
Fix once and for all a certificate
\[
g^d
=
\sum_{i=0}^{n_p}\lambda_i p_i,
\qquad
\lambda_i\in\SOS,
\]
and choose \(r_g\in\N\) such that
\[
g^d\in\QM_{r_g}(p).
\]
Because \(d-1\) is even, the polynomial
\[
U_n(x)
:=
C^{(1-d)/2}
W_n(g(x)/C)^{(d-1)/2}
\]
is well defined, and
\[
C^{1-d}g^dW_n(g/C)^{d-1}
=
U_n^2g^d.
\]
Multiplying the fixed certificate of \(g^d\) by \(U_n^2\) yields
\[
C^{1-d}g^dW_n(g/C)^{d-1}
=
\sum_{i=0}^{n_p}(U_n^2\lambda_i)p_i.
\]
Since
\[
\deg U_n^2
=
D(n-1)(d-1),
\]
we obtain
\[
C^{1-d}g^dW_n(g/C)^{d-1}
\in
\QM_{\,r_g+\frac{D(d-1)}2(n-1)}(p).
\]
Thus the order of the second certificate is also bounded by an affine
function of \(n\).}

	Combining the certificates of the two terms in
	\eqref{eq:recoveridentity}, {there exist integers $A\geq1$, $B\geq0$},
	depending only on \(g\), \(p\), and \(d\), such that
	$g+C\varepsilon_n\in\QM_{An+B}(p)$ for every $n\geq1$.
\end{proof}

We can now prove Theorem~\ref{thm:main}.
\begin{proof}[Proof of Theorem~\ref{thm:main}]
	Define the  objective gap
	\begin{equation}
		\label{eq:fdef}
		q(x):=f(x)-f^\star.
	\end{equation}
	Then, it holds that $q\geq 0$ on $\K$, and by Lemma \ref{lem:natgen}  there
	exist fixed   $\sigma_i \in \SOS$
	such that $q = \sum_{i=0}^m \sigma_i g_i$ where the $g_i$'s for $i\geq 1$ are
	the natural generators associated to $\K$ for $1\leq i \leq m$ and $g_0:=1$.

	For every {$i=1,\ldots,m$}, Lemma~\ref{lem:oddpower}
	establishes that there
	exists an odd integer $d_i$ such that $g_i^{d_i}\in\QM(p)$.
	Applying Lemma~\ref{lem:recover}
	to each $g_i$, we
	deduce that there exist fixed constants $A_i>0$,  $B_i\geq 0$, $C_i > 0$ such
	that
	\begin{equation}\label{eq:gjdegree}
		g_i+C_i\varepsilon_n\in\QM_{A_in+B_i}(p).
	\end{equation}
	Now, observe that
	\[
		\sum_{i=0}^m \sigma_i (g_i+C_i\varepsilon_n) = q+\sigma\varepsilon_n
	\]
	where the polynomial $\sigma:=\sum_{i=0}^m C_i\sigma_i \in \SOS$ is fixed (since all $C_i$'s and $\sigma_i$'s are fixed).
	Also, since all $\sigma_i$ are fixed, the certificate degrees in
	\eqref{eq:gjdegree} imply that there exist constants $A>0$ and $B'\geq 0$
	such that
	\begin{equation}\label{eq:qs}
		q+\varepsilon_n \sigma \in \QM_{An+B'}(p).
	\end{equation}

	Let $C':=\max_{x\in[-R,R]}\sigma(x)$.
	Since $\sigma$ is a sum of SOS polynomials with positive coefficients, it
	holds that $C'\geq 0$.  Moreover $C'-\sigma(x)\geq 0$ for every $x\in[-R,R]$ by construction.
	By {Lemmas~\ref{lem:archi} and~\ref{lem:interval}},
	\begin{equation}
		\label{eq:C0S}
		C'-\sigma\in\QM(p).
	\end{equation}
	This is a fixed polynomial with a fixed certificate.  Multiplying
	\eqref{eq:C0S} by the positive scalar $\varepsilon_n$ and adding the result to \eqref{eq:qs} yields
	\begin{equation}
		\label{eq:finalcert}
		q+C'\varepsilon_n\in\QM_{An+B}(p)
	\end{equation}
	for some $B \geq B'$.

	Now let $r$ be sufficiently large and choose 
    {$n(r):=\left\lfloor\frac{r-B}{A}\right\rfloor$.
Then $n(r)\geq 1$ and $An(r)+B\leq r$.}
		Hence \eqref{eq:finalcert}  implies $q+C'\varepsilon_{n(r)}\in\QM_{r}(p)$.
		Recalling  \eqref{eq:fdef}, we obtain $f-\left(f^\star-C'\varepsilon_{n(r)}\right)
			\in\QM_{r}(p)$.
		By the definition \eqref{eq:sos} of $f_r$ as a supremum,
		$f_{r}\geq f^\star-C'\varepsilon_{n(r)}$.
		Therefore
		\begin{equation}
			\label{eq:errornr}
			0\leq f^\star-f_r
			\leq C'\varepsilon_{n(r)}
			\leq \frac{C'}{n(r)^2},
		\end{equation}
		where the last inequality follows from Lemma~\ref{lem:epsbound}.

{If \(r\geq2(A+B)\), then
\[
	n(r)
	\geq
	\frac{r-B}{A}-1
	=
	\frac{r-A-B}{A}
	\geq
	\frac{r}{2A}.
\]
Thus \eqref{eq:errornr} gives
\[
	0\leq f^\star-f_r
	\leq
	\frac{4C'A^2}{r^2}.
\]
Letting
\[
	C:=4C'A^2
\]
and increasing \(r\) if necessary proves \eqref{eq:mainrate}.}
\end{proof}

\section{Basic Example with Worst Convergence Rate}

Consider
\begin{equation}
	\label{eq:pop1}
	\tag{POP}
	f^\star:=\min_{x\in\R}\ f(x)
	\text{ s.t. }p(x)\ge0,
	\text{ with }
	f(x):=1-x,
	\quad
	p(x):=(1-x)^3(1+x).
\end{equation}
The feasible set is $K=[-1,1]$ and $f^\star=0$, attained
at $x=1$, where the constraint is cubically degenerate: $p$ has a
zero of multiplicity three at the minimizer.

For $r\ge2$, the order-$r$ relaxation error of \eqref{eq:pop1} is
\begin{equation}
	\label{eq:sos1}
	\tag{SOS$_r$}
	e_r
	:=
	f^* - \sup\{v\in\R:\ 1-x-v\in \QM_{r} (p)\bigr\}.
\end{equation}
Membership in the truncated quadratic module $\QM_{r}(p)$ certifies nonnegativity on $\K$, so
$e_r\ge f^\star=0$; and the value $0$ is never attained: if
$1-x=\sigma_0+p\sigma_1$ then $\sigma_0(1)=0$, so $\sigma_0$ vanishes
with even multiplicity at least two
while $p\sigma_1$ vanisheswith odd multiplicity at least three
whereas $1-x$ has a simple zero there.  Hence $e_r>0$ at every order $r$.

\begin{theorem}
	\label{thm:main1}
	For every $r\ge2$, it holds that \[e_r=\frac{1}{2r(r-1)}.\]
\end{theorem}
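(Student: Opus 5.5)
The plan is to prove the two inequalities $e_r\le \frac{1}{2r(r-1)}$ and $e_r\ge \frac{1}{2r(r-1)}$ separately. The first comes from an explicit primal certificate, the second from an explicit dual (moment) functional, and complementary slackness should make the two match.

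I first change variables to $t:=1-x$, so that $K=\{t\in[0,2]\}$, the objective is $t$ and the constraint is $p=t^3(2-t)$. The relaxation \eqref{eq:sos1} then asks for the smallest $v\ge0$ such that
\[
t+v=\sigma_0(t)+\sigma_1(t)\,t^3(2-t),\qquad \deg\sigma_0\le 2r,\ \deg\sigma_1\le 2r-4 .
\]
Comparing Taylor coefficients at $t=0$ forces $\sigma_0(0)=v$ and $\sigma_0'(0)=1$. For a univariate SOS this means $\sigma_0=s^2+\tilde s^2$ with $2s(0)s'(0)+2\tilde s(0)\tilde s'(0)=1$. The target value is suggestive: $\frac{1}{2r(r-1)}=\frac{1}{4P_{r-1}'(1)}$, where $P_k$ is the Legendre polynomial and $P_k'(1)=k(k+1)/2$. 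This suggests that the extremal certificate is built from Legendre (or Jacobi) polynomials of degree $r-1$ on $[0,2]$, that is, on $x\in[-1,1]$, with the Christoffel–Darboux kernel at the endpoint $x=1$ playing the role of $s$.

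\textbf{Upper bound.} I would set $v_r:=\frac{1}{2r(r-1)}$ and take $\sigma_0=c\,s(t)^2$, with $s$ proportional to the Christoffel–Darboux kernel $K_{r-1}(x,1)=\sum_{k\le r-1}\frac{2k+1}{2}P_k(x)P_k(1)$, normalized so that $\sigma_0(0)=v_r$ and $\sigma_0'(0)=1$. I would then show that $t+v_r-\sigma_0$ is divisible by $t^3$ and that the quotient, divided further by $2-t$ (after possibly adding a multiple of $(2-t)$ to $\sigma_0$), is a nonnegative polynomial of degree $\le 2r-4$. Here I would use the Christoffel–Darboux identity and the known sign pattern of Legendre kernels. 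If this exact route fails, I would instead solve the small semidefinite programs for $r=2,3,4$, guess the closed form of $\sigma_0,\sigma_1$, and verify it by induction in $r$ through the three-term recurrence.

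\textbf{Lower bound.} For this direction I would construct a linear functional $L$ on $\R[t]_{\le 2r}$ with $L(1)=1$, $L(h^2)\ge0$ for $\deg h\le r$, $L(h^2 p)\ge0$ for $\deg h\le r-2$, and $L(t)=-v_r$. Weak duality then gives $v\ge v_r$ for every certificate. My first attempt is an atomic functional: one atom at $t=-v_r$ (just outside $K$, where the certificate $t+v_r$ vanishes) and the remaining atoms at Gauss–Radau-type nodes of a Legendre/Jacobi quadrature on $[0,2]$. The weights are chosen so that the localizing form $L(h^2p)$ remains PSD, because the negative contribution of the outside atom must be offset by the cubic vanishing of $p$ at $0$. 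Complementary slackness with the primal certificate should pin down the nodes as the zeros of the $s$ above together with $-v_r$.

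The main obstacle is verifying positivity of the localizing matrix in this lower bound. Positivity of the moment matrix is automatic for positive weights, but the localizing form has a negative atom contribution. For that step I would reduce the localizing form to a quadratic form in the Legendre coefficients of $h$, and show that it is a rank-one negative perturbation of a diagonal positive form. Its determinant condition should then reduce exactly to the identity $P'_{r-1}(1)=r(r-1)/2$.
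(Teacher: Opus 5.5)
Your architecture matches the paper's: an explicit certificate for $e_r\le v_r$, an atomic functional with one node outside $K$ for $e_r\ge v_r$, and a rank-one argument for the localizing form. There is a genuine gap, however: both concrete objects you propose are provably not optimal, so neither bound goes through.

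\emph{Why the ansatz fails.} Suppose $L=\sum_i w_i\delta_{t_i}$ with $w_i>0$ satisfies your dual constraints and $L(t)=-v_r$. Some atom $t_e$ is negative, and there $p=t^3(2-t)<0$. If fewer than $r-1$ atoms had $p(t_i)>0$, a polynomial $h$ of degree at most $r-2$ vanishing at those atoms with $h(t_e)=1$ would give $L(h^2p)<0$. So $L$ has at least $r$ atoms. Since $0=L(t+v_r)=L(\sigma_0)+L(\sigma_1p)$ with both terms nonnegative, every certificate at level $v_r$ has $\sigma_0$ vanishing at all of them. Hence $\sigma_0=c\,s^2$ forces $\deg s\ge r$, which rules out any Christoffel--Darboux kernel $K_{r-1}(\cdot,1)$, Legendre or Jacobi.

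This already fails for $r=2$. There $\sigma_1=\mu$ is a constant, and a quadratic $\sigma_0$ would force $\mu=0$ and $\sigma_0=t+v$, which is not SOS. In fact the only optimal certificate is $\mu=1$, $\sigma_0=(t^2-t-\frac12)^2$.

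Your exterior node fails for the same reason. Atoms must be zeros of $\sigma_0$, not of $t+v_r$, so for $r=2$ the exterior atom must sit at $t=(1-\sqrt3)/2$, not at $-v_2=-\frac14$. Your two ans\"atze are also mutually inconsistent: $c\,s^2$ with all $r-1$ zeros of $s$ in $(0,2)$ is strictly positive at $t=-v_r$.

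\emph{What the paper uses instead.} The missing ingredient is a Chebyshev, not Legendre, identity. With $A_r=rT_{r-1}-(r-1)T_r$ of degree $r$ and $B_r=(rU_{r-2}-(r-1)U_{r-1})/(1-x)$, one has $A_r^2+pB_r^2=C_r:=1+2r(r-1)(1-x)$. This is $|z|^2$ for $z=(r-(r-1)e^{i\theta})e^{i(r-1)\theta}$, $x=\cos\theta$, and dividing by $2r(r-1)$ gives the upper bound.

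The paper's dual functional is supported on the $r$ roots of $A_r$: $r-1$ of them in $(-1,1)$ and one $x_r>1$. Thus $t_r=1-x_r$, which is asymptotically about $-0.72/r^2$ rather than $-v_r\sim-0.5/r^2$. The weights are proportional to $(1-x_i)^{-2}$.

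The localizing positivity you flag as the main obstacle is indeed a rank-one statement, driven by two facts:
\begin{itemize}
\item the differential identity $C_rA_r'+r(r-1)A_r=r(r-1)(2r-1)(1-x)^2B_r$, which gives $w_ip(x_i)B_r(x_i)\propto 1/A_r'(x_i)$;
\item the vanishing of the $x^{r-1}$-coefficient in Lagrange interpolation at the roots of $A_r$, which gives $\sum_i q(x_i)/A_r'(x_i)=0$ for $\deg q\le r-2$.
\end{itemize}
Together these yield the reproducing identity $\langle B_r,q\rangle_{\mathrm{int}}=\lambda B_r(x_r)q(x_r)$ with $\lambda=-w_rp(x_r)>0$. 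Cauchy--Schwarz then finishes, with equality at $q=B_r$.

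The coincidence $v_r=1/(4P'_{r-1}(1))$ is numerical and does not point to these objects. Your fallback of guessing closed forms from small SDPs is not yet an argument.
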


The proof occupies the rest of this section: one identity gives the
lower bound, three short lemmas and one Cauchy-Schwarz inequality give
the matching upper bound.

Let $T_r$ resp. $U_r$ be the Chebyshev polynomial of first resp. second kind, i.e.
$T_r(\cos\theta)=\cos r\theta$ and
$U_r(\cos\theta)\sin\theta=\sin(r+1)\theta$.

For a polynomial $f \in \R[x]$, $f'$ denotes its first order derivative.\\
Recall that both $T_r$ and $U_r$ have degree $r$ and that
$$T_r'=rU_{r-1}, \quad T_r=U_r-xU_{r-1} \quad \text{ and } \quad U_r=2xU_{r-1}-U_{r-2}.$$
Throughout, fix $r\ge2$ and set
\[
	A_r(x):= r\,T_{r-1}(x)-(r-1)\,T_r(x),
	\qquad
	B_r(x):=\frac{r\,U_{r-2}(x)-(r-1)\,U_{r-1}(x)}{1-x}.
\]

\begin{lemma}
	\label{lem:pell}
	It holds that $A_r$ and $B_r$ are polynomials with integer coefficients with $\deg A_r=r$,
	$\deg B_r=r-2$ and
	\begin{equation}
		\label{eq:pell}
		A_r(x)^2+p(x)\,B_r(x)^2=C_r(x):=1+2r(r-1)(1-x).
	\end{equation}
	Consequently $1-x+\frac{1}{2r(r-1)}\in\QM_{r}({p})$ and
	$e_r\le\frac{1}{2r(r-1)}$.
\end{lemma}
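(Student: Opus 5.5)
The identity \eqref{eq:pell} is a trigonometric computation after the substitution $x=\cos\theta$. Once it is available, the membership claim and the bound on $e_r$ follow at once.

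\textbf{Polynomiality and degrees.} $A_r$ is a polynomial with integer coefficients because $T_{r-1}$ and $T_r$ are. Its leading coefficient is $-(r-1)2^{r-1}\neq 0$, so $\deg A_r=r$. For $B_r$, evaluate the numerator at $x=1$ using $U_k(1)=k+1$:
\[
r(r-1)-(r-1)r=0 .
\]
Hence $1-x$ divides the numerator. Since the divisor is monic up to sign, the quotient still has integer coefficients. The numerator has leading term $-(r-1)2^{r-1}x^{r-1}$, so $\deg B_r=r-2$.

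\textbf{The identity.} Put $x=\cos\theta$, write $s=\sin\theta$, and note that $p(x)=(1-x)^3(1+x)=(1-x)^2s^2$. Then
\[
pB_r^2=s^2\bigl(rU_{r-2}-(r-1)U_{r-1}\bigr)^2=\bigl(r\sin(r-1)\theta-(r-1)\sin r\theta\bigr)^2 ,
\]
while $A_r=r\cos(r-1)\theta-(r-1)\cos r\theta$. Adding the two squares gives
\[
A_r^2+pB_r^2=r^2+(r-1)^2-2r(r-1)\bigl(\cos(r-1)\theta\cos r\theta+\sin(r-1)\theta\sin r\theta\bigr).
\]
The bracket equals $\cos\theta=x$, so the right-hand side is $r^2+(r-1)^2-2r(r-1)x=1+2r(r-1)(1-x)$. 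This establishes the identity for all $x\in[-1,1]$, and it then holds identically because both sides are polynomials.

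\textbf{Conclusion.} Divide \eqref{eq:pell} by $2r(r-1)$ to get
\[
1-x+\frac{1}{2r(r-1)}=\frac{A_r^2}{2r(r-1)}+\frac{B_r^2}{2r(r-1)}\,p .
\]
Check the degree bounds: $\deg A_r^2=2r$ and $\deg(B_r^2p)=2(r-2)+4=2r$. So this is a certificate in $\QM_r(p)$. Consequently $v=-\frac{1}{2r(r-1)}$ is feasible in \eqref{eq:sos1}, and since $f^\star=0$ this gives $e_r\le \frac{1}{2r(r-1)}$.

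The only real work is the bookkeeping in the identity: keeping track of how the factor $(1-x)^2$ is absorbed into $p$, and making sure the $\sin\theta$ from the $U$-definition pairs up correctly. Everything else is routine.
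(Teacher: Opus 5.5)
Your proof is correct and follows essentially the same route as the paper. The paper packages the computation as $|z|^2=(\operatorname{Re}z)^2+(\operatorname{Im}z)^2$ for $z=(r-(r-1)e^{i\theta})e^{i(r-1)\theta}$, which is exactly your direct expansion using $\cos(r-1)\theta\cos r\theta+\sin(r-1)\theta\sin r\theta=\cos\theta$; your explicit check of the leading term of the numerator of $B_r$ fills in the degree count that the paper only asserts as immediate.
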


\begin{proof}
	Let $x:=\cos\theta$ and  $z:=\bigl(r-(r-1)e^{i\theta}\bigr)e^{i(r-1)\theta}$. Then
	\[
		|z|^2=r^2+(r-1)^2-2r(r-1)\cos\theta=1+2r(r-1)(1-x) = C_r(x).
	\]
	The real and imaginary parts of $z$ are
	\[
		\operatorname{Re}z
		=r\cos(r-1)\theta-(r-1)\cos r\theta
		=A_r(x)
	\]
	and
	\[
		\operatorname{Im}z
		=r\sin(r-1)\theta-(r-1)\sin r\theta
		=\sin\theta\,\bigl[r\,U_{r-2}(x)-(r-1)\,U_{r-1}(x)\bigr].
	\]
	The bracket vanishes at $x=1$, because
	$rU_{r-2}(1)-(r-1)U_{r-1}(1)=r(r-1)-(r-1)r=0$; dividing by the integer
	linear factor $1-x$ gives the integer coefficient polynomial $B_r(x)$ of degree $r-2$,
	and
	\[
		(\operatorname{Im}z)^2=(1-x^2)(1-x)^2B_r^2(x)=p(x)\,B_r^2(x).
	\]
	Then \eqref{eq:pell} writes
	\[
		(\operatorname{Re}z)^2+(\operatorname{Im}z)^2=|z|^2
	\]
	on $[-1,1]$, hence as polynomials.  Since $\deg A_r=r$
	(its leading coefficient is $-(r-1)2^{r-1}$), dividing \eqref{eq:pell} by
	$2r(r-1)$ exhibits $1-x+\frac{1}{2r(r-1)}$ as an element of
	$\QM_{r}(p)$ with admissible degrees.

	The fact that $\deg A_r = r$ and $\deg B_r = r-2$ are immediate from
		their definitions and $\deg T_r = r$ and $\deg U_r = r$.

\end{proof}

The table below displays the polynomials of identity \eqref{eq:pell} for small values of $r$.
\[
	\begin{array}{c|l|l|l}
		r & A_r(x)                         & B_r(x) & C_r(x) \\
		\hline
		2
		  & -2x^2+2x+1
		  & 2
		  & 5-4x
		\\[1mm]
		3
		  & -8x^3+6x^2+6x-3
		  & 8x+2
		  & 13-12x
		\\[1mm]
		4
		  & -24x^4+16x^3+24x^2-12x-3
		  & 24x^2+8x-4
		  & 25-24x
		\\[1mm]
		5
		  & -64x^5+40x^4+80x^3-40x^2-20x+5
		  & 64x^3+24x^2-24x-4
		  & 41-40x
	\end{array}
\]

\begin{lemma}
	\label{lem:roots}
	It holds that $A_r$ has $r$ simple real roots distributed as follows:
	\[-1<x_1<\cdots<x_{r-1}<1<x_r.\]
\end{lemma}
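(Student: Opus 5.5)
We work in the trigonometric parametrization $x=\cos\theta$, $\theta\in[0,\pi]$, where $A_r(\cos\theta)=r\cos(r-1)\theta-(r-1)\cos r\theta=\operatorname{Re}z$ with $z=(r-(r-1)e^{i\theta})e^{i(r-1)\theta}$, as in the proof of Lemma~\ref{lem:pell}. Since $\deg A_r=r$, it suffices to exhibit $r-1$ distinct roots in $(-1,1)$ and one root in $(1,\infty)$. Distinctness of $r$ real roots of a degree-$r$ polynomial then forces them all to be simple.

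For the roots in $(-1,1)$ we use sign changes. The endpoint values are
\[
A_r(1)=r-(r-1)=1>0,\qquad A_r(-1)=r(-1)^{r-1}-(r-1)(-1)^r=(-1)^{r-1}(2r-1).
\]
At the interior points $\theta_k=k\pi/(r-1)$, $k=1,\dots,r-2$, we have $\cos(r-1)\theta_k=(-1)^k$ and $\cos r\theta_k=\cos(k\pi+\theta_k)=(-1)^k\cos\theta_k$. Hence
\[
A_r(\cos\theta_k)=(-1)^k\bigl(r-(r-1)\cos\theta_k\bigr),
\]
and this has sign $(-1)^k$ because $r-(r-1)\cos\theta_k\geq 1>0$. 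The same formula covers $k=0$ and $k=r-1$, which give the endpoint values $1$ and $(-1)^{r-1}(2r-1)$.

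So along $\theta_0=0<\theta_1<\dots<\theta_{r-1}=\pi$ the values of $A_r$ strictly alternate in sign. The corresponding points $x=\cos\theta_k$ strictly decrease from $1$ to $-1$. The intermediate value theorem then gives one root in each of the $r-1$ open intervals between consecutive points, so $A_r$ has $r-1$ distinct roots in $(-1,1)$. The strictness at $\pm1$ is built in: $A_r(\pm1)\neq0$, so no root sits at an endpoint.

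For the root beyond $1$, note that $A_r(1)=1>0$, while the leading coefficient $-(r-1)2^{r-1}$ (computed in Lemma~\ref{lem:pell}) is negative, so $A_r(x)\to-\infty$ as $x\to+\infty$. The intermediate value theorem gives a root $x_r>1$. This accounts for all $r$ roots. Ordering them yields $-1<x_1<\dots<x_{r-1}<1<x_r$, and since there are $r$ distinct roots of a degree-$r$ polynomial, each is simple.

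The only delicate step is to check that the alternation holds exactly at the chosen nodes, including the endpoints. This reduces to the identity $\cos r\theta_k=(-1)^k\cos\theta_k$ and the positivity of $r-(r-1)\cos\theta_k$, both of which are immediate. One also needs $r\geq2$ so that the partition $\theta_k$ is nondegenerate; for $r=2$ there are no interior nodes, but the endpoint sign change still yields the single root in $(-1,1)$.
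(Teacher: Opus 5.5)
Your proof is correct and follows essentially the same route as the paper. Both use the sign alternation $A_r(\cos\theta_k)=(-1)^k\bigl(r-(r-1)\cos\theta_k\bigr)$ at the nodes $\cos\frac{k\pi}{r-1}$, $k=0,\ldots,r-1$, to get $r-1$ roots in $(-1,1)$, then $A_r(1)=1$ together with the negative leading coefficient to get the extra root in $(1,\infty)$. Your explicit checks that $\pm1$ are not roots, and that $r$ distinct roots of a degree-$r$ polynomial must be simple, only spell out what the paper leaves implicit.
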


\begin{proof}
	Let $z_k:=\cos\frac{k\pi}{r-1}$ for $k=0,\ldots,r-1$, so that
	$1=z_0>\cdots>z_{r-1}=-1$.  Then $T_{r-1}(z_k)=(-1)^k$ and
	$T_r(z_k)=\cos (k\pi+\tfrac{k\pi}{r-1})=(-1)^k z_k$, so
	\[
		A_r(z_k)=(-1)^k \underbrace{(r-(r-1)z_k)}_{\geq 1}.
	\]
	The signs alternate, giving at least $r-1$ roots in $(-1,1)$.  Since
	$\deg A_r=r$ with negative leading coefficient and $A_r(1)=1>0$, there
	is a further root in $(1,\infty)$.  This accounts for $r=\deg A_r$
	roots, all simple.
\end{proof}

\begin{lemma}
	\label{lem:fo}
	With $c:= r(r-1)(2r-1)$,
	\begin{equation}
		\label{eq:fo}
		C_r(x)\,A_r'(x)+r(r-1)\,A_r(x)=c\,(1-x)^2\,B_r(x).
	\end{equation}
\end{lemma}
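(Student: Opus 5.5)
The plan is to derive \eqref{eq:fo} from the Chebyshev relations recalled before Lemma~\ref{lem:pell}, by writing everything in the basis $\{U_{r-1},U_{r-2}\}$ with coefficients polynomial in $x$, and then comparing coefficients.

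\emph{Step 1 (remove the division by $1-x$).} By definition of $B_r$ we have $(1-x)B_r = rU_{r-2}-(r-1)U_{r-1}$. Hence the right-hand side of \eqref{eq:fo} equals
\[
c\,(1-x)^2B_r = r(r-1)(2r-1)(1-x)\bigl(rU_{r-2}-(r-1)U_{r-1}\bigr).
\]

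\emph{Step 2 (the derivative).} Using $T_k'=kU_{k-1}$ for $k=r-1$ and $k=r$, we get
\[
A_r' = r(r-1)U_{r-2}-(r-1)rU_{r-1} = r(r-1)\bigl(U_{r-2}-U_{r-1}\bigr).
\]
This is a common factor $r(r-1)$ shared with the term $r(r-1)A_r$ on the left. After dividing by $r(r-1)$, \eqref{eq:fo} becomes
\[
C_r\,(U_{r-2}-U_{r-1}) + A_r = (2r-1)(1-x)\bigl(rU_{r-2}-(r-1)U_{r-1}\bigr).
\]

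\emph{Step 3 (rewrite $A_r$).} From $T_k=U_k-xU_{k-1}$ with $k=r-1$ we get $T_{r-1}=U_{r-1}-xU_{r-2}$. Combining $T_r=U_r-xU_{r-1}$ with $U_r=2xU_{r-1}-U_{r-2}$ gives $T_r=xU_{r-1}-U_{r-2}$. Therefore
\[
A_r = \bigl(r-(r-1)x\bigr)U_{r-1} + \bigl((r-1)-rx\bigr)U_{r-2}.
\]

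\emph{Step 4 (compare coefficients).} It suffices to match the coefficients of $U_{r-1}$ and of $U_{r-2}$ separately; linear independence is not needed, since equal coefficients give equal polynomials. Recall $C_r=1+2r(r-1)(1-x)$.

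For $U_{r-1}$, the left side gives
\[
-C_r + r-(r-1)x = (r-1)(1-x)-2r(r-1)(1-x) = -(r-1)(2r-1)(1-x),
\]
which matches the right side.

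For $U_{r-2}$, the left side gives
\[
C_r + (r-1) - rx = r(1-x)+2r(r-1)(1-x) = r(2r-1)(1-x),
\]
which also matches.

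No step is a real obstacle; the only place needing care is Step 3, namely getting the expression of $T_r$ in terms of $U_{r-1},U_{r-2}$ right. As an independent check, one can redo the computation trigonometrically. With $z=re^{i(r-1)\theta}-(r-1)e^{ir\theta}$ we have $dz/d\theta = ir(r-1)e^{i(r-1)\theta}(1-e^{i\theta})$, and one verifies the identity on $x=\cos\theta\in[-1,1]$, hence as polynomials.
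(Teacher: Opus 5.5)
Your proposal is correct and follows the paper's proof essentially step by step. Like the paper, you compute $A_r'=r(r-1)(U_{r-2}-U_{r-1})$, divide by $r(r-1)$ using $(1-x)B_r=rU_{r-2}-(r-1)U_{r-1}$, rewrite $A_r=(r-(r-1)x)U_{r-1}+(r-1-rx)U_{r-2}$, and match the coefficients of $U_{r-1}$ and $U_{r-2}$; your remark that only the direction ``equal coefficients give equal polynomials'' is needed, with no appeal to linear independence, is accurate.
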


\begin{proof}
	From $T_r'=rU_{r-1}$, we deduce that
	$$A_r'=r(r-1)\bigl(U_{r-2}-U_{r-1}\bigr).$$
	So, after dividing by $r(r-1)$ and using
	$(1-x)B_r=rU_{r-2}-(r-1)U_{r-1}$,  \eqref{eq:fo} is
	equivalent,to
	$$C_r(x)\,(U_{r-2}(x)-U_{r-1}(x))+A_r(x)
		=(2r-1)(1-x)\bigl[r\,U_{r-2}(x)-(r-1)\,U_{r-1}(x)\bigr].$$
	By $T_r=U_r-xU_{r-1}$ and $U_r=2xU_{r-1}-U_{r-2}$, it holds that
	\begin{align*}
		A_r(x) & =r(U_{r-1}(x)-xU_{r-2}(x))-(r-1)(U_r(x)-xU_{r-1}(x)) \\
		       & =(r-(r-1)x)U_{r-1}(x)+(r-1-rx)U_{r-2}(x),
	\end{align*}
	so, in the left-hand side, the coefficient of $U_{r-2}$ is
	$$C_r(x)+r-1-rx=r(1-x)+2r(r-1)(1-x)=r(2r-1)(1-x)$$
	and the coefficient of $U_{r-1}$ is
	$$-C_r(x)+r-(r-1)x=(r-1)(1-x)-2r(r-1)(1-x)=-(r-1)(2r-1)(1-x).$$
	This is the right-hand side.
\end{proof}

\begin{lemma}
	\label{lem:weights}
	At every root $x_i$ of $A_r$ it holds
	\begin{equation*}
		p(x_i)\,B_r(x_i)\,A_r'(x_i)=c\,(1-x_i)^2>0 .
	\end{equation*}
\end{lemma}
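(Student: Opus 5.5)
The plan is to evaluate the differential identity \eqref{eq:fo} of Lemma~\ref{lem:fo} at a root $x_i$ of $A_r$, and then combine it with the Pell-type identity \eqref{eq:pell} of Lemma~\ref{lem:pell}.

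First, setting $x=x_i$ in \eqref{eq:fo} kills the term $r(r-1)A_r(x_i)$. This leaves
\[
C_r(x_i)\,A_r'(x_i)=c\,(1-x_i)^2\,B_r(x_i).
\]
Second, evaluating \eqref{eq:pell} at $x_i$ gives $C_r(x_i)=p(x_i)B_r(x_i)^2$.

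Substituting the second relation into the first yields
\[
p(x_i)\,B_r(x_i)^2\,A_r'(x_i)=c\,(1-x_i)^2\,B_r(x_i).
\]
To divide by $B_r(x_i)$ I need $B_r(x_i)\neq 0$. This follows from $C_r(x_i)\neq0$: if $B_r(x_i)=0$, then \eqref{eq:pell} at a root of $A_r$ would force $C_r(x_i)=0$. After cancelling one factor $B_r(x_i)$, I obtain the claimed identity $p(x_i)B_r(x_i)A_r'(x_i)=c(1-x_i)^2$.

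The main point, and the only mild obstacle, is therefore to show $C_r(x_i)\neq 0$ at every root, and to check the strict positivity of the right-hand side.

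For the roots $x_1,\dots,x_{r-1}$, Lemma~\ref{lem:roots} places them in $(-1,1)$. There $C_r(x_i)=1+2r(r-1)(1-x_i)>1>0$.

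For the root $x_r>1$, I would argue directly from \eqref{eq:pell}. Since $A_r(x_r)=0$, we have $C_r(x_r)=p(x_r)B_r(x_r)^2$. For $x>1$ the factor $(1-x)^3$ is negative while $1+x>0$, so $p(x_r)<0$ and hence $C_r(x_r)\le 0$. Since $C_r$ is affine and decreasing with unique zero $x^\dagger=1+\frac{1}{2r(r-1)}$, this gives $x_r\ge x^\dagger$. Equality $x_r=x^\dagger$ would require $B_r(x^\dagger)=0$, meaning $A_r$ and $B_r$ both vanish at $x^\dagger$.

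Rather than exclude this directly, I would rule it out by simplicity of the root. Suppose $A_r(x_r)=0$ and $C_r(x_r)=0$. Then \eqref{eq:fo} at $x_r$ gives $0=c(1-x_r)^2B_r(x_r)$, so $B_r(x_r)=0$. Differentiating \eqref{eq:pell} at this point gives
\[
2A_rA_r'+p'B_r^2+2pB_rB_r'=C_r'=-2r(r-1).
\]
Every term on the left vanishes since $A_r(x_r)=B_r(x_r)=0$, while the right-hand side is nonzero, a contradiction. Hence $C_r(x_i)\neq0$, and so $B_r(x_i)\neq 0$, at every root.

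Finally, positivity of the right-hand side: $c=r(r-1)(2r-1)>0$ for $r\ge2$, and $(1-x_i)^2>0$ because $x_i\neq1$. This holds since $A_r(1)=1$ (Lemma~\ref{lem:roots}), so $1$ is not a root of $A_r$.
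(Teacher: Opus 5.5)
Your proof is correct and follows essentially the same route as the paper: evaluate \eqref{eq:fo} at the root, combine it with \eqref{eq:pell}, and use the differentiated Pell identity to rule out a common root of $A_r$ and $B_r$. The paper applies that derivative argument directly at every root to get $B_r(x_i)\neq 0$, so your interior/exterior case split and the discussion of $x^\dagger$ are unnecessary but harmless.
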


\begin{proof}
	Differentiating \eqref{eq:pell} gives
	$2A_rA_r'+p'B_r^2+2pB_rB_r'=-2r(r-1)$.  At a common root of $A_r$ and
	$B_r$ the left side would vanish, a contradiction; so
	$B_r(x_i)\neq0$, and by \eqref{eq:pell} and Lemma~\ref{lem:roots},
	$C_r(x_i)=p(x_i)B_r(x_i)^2\neq0$. We apply Lemma ~\ref{lem:fo} by evaluating \eqref{eq:fo} at $x_i$ and
	multiplying by $p(x_i)B_r(x_i)/C_r(x_i)$, we obtain
	\[
		p(x_i)\,B_r(x_i)\,A_r'(x_i)
		=c\,(1-x_i)^2\,\frac{p(x_i)B_r(x_i)^2}{C_r(x_i)}
		=c\,(1-x_i)^2,
	\]
	which is positive because $x_i\neq1$ (see Lemma~\ref{lem:roots}.
\end{proof}

\begin{proof}[Proof of Theorem~\ref{thm:main1}]
	By Lemma~\ref{lem:weights}, the positive numbers
	\[
		\widehat w_i
		:=
		\frac{1}{c(1-x_i)^2},
		\qquad
		i=1,\ldots,r,
	\]
	satisfy
	\[
		\widehat w_i\,p(x_i)B_r(x_i)
		=
		\frac{1}{A_r'(x_i)}.
	\]
    Indeed, $x_i\neq 1$ by Lemma~\ref{lem:roots} and $c>0$, so each $\widehat w_i$
	is well defined and positive; and Lemma~\ref{lem:weights} asserts
	$p(x_i)B_r(x_i)A_r'(x_i)=c(1-x_i)^2\neq 0$, so that the three factors
	$p(x_i)$, $B_r(x_i)$, $A_r'(x_i)$ are all non-zero and dividing that identity
	by $c(1-x_i)^2A_r'(x_i)$ gives the claim.
	Let
	\[
		w_i
		:=
		\frac{\widehat w_i}{\sum_{j=1}^r\widehat w_j}
	\]
	and define the linear functional
	\[
		\ell(q)
		:=
		\sum_{i=1}^r w_i q(x_i),
		\qquad
		q\in\R[x]_{2r}.
	\]
	Since the weights \(w_i\) are positive and sum to one, we have $\ell(1)=1$
	and $$\ell(q^2)= \sum_{i=1}^r w_i q(x_i)^2\geq 0$$ for all $q\in\R[x]$ such that  \(\deg q\leq r\).

	We next prove that $\ell(pq^2)\geq 0$ for every
		$q\in \R_{r-2}[x]$ (recall that $p$ has degree $4$).
	Let \(q\in\R[x]\) satisfy \(\deg q\leq r-2\).
	The Lagrange interpolation formula at the \(r\) simple roots
	\(x_1,\ldots,x_r\) of \(A_r\) reads
	\[
		q(x)
		=
		\sum_{i=1}^r
		q(x_i)
		\frac{A_r(x)}{A_r'(x_i)(x-x_i)}.
	\]
	Since the roots of $A_r$ are simple, each quotient $A_r(x)/(x-x_i)$ is a
	polynomial of degree $r-1$ whose leading coefficient is that of $A_r$, namely
	$\alpha_r:=-(r-1)2^{r-1}\neq 0$. The coefficient of $x^{r-1}$ in the
	right-hand side is therefore $\alpha_r\sum_{i=1}^rq(x_i)/A_r'(x_i)$, whereas
	it vanishes in the left-hand side because $\deg q\leq r-2$. Dividing by
	$\alpha_r$ we obtain
	\[
		\sum_{i=1}^r\frac{q(x_i)}{A_r'(x_i)}=0.
	\]
Let $S:=\sum_{j=1}^r\widehat w_j$, a positive real number, so that
	$w_i=\widehat w_i/S$ for every $i$. Dividing by $S$ the identity
	$\widehat w_ip(x_i)B_r(x_i)=1/A_r'(x_i)$ established above yields
	\[
		w_i\,p(x_i)B_r(x_i)=\frac{1}{S\,A_r'(x_i)},
		\qquad i=1,\ldots,r,
	\]
	so that the weighted sum against $B_r$ is, up to the factor $1/S$, the sum of
	the Lagrange coefficients just computed:
	\begin{equation}
		\label{eq:orth}
		\sum_{i=1}^r
		w_i p(x_i)B_r(x_i)q(x_i)
		=
		\frac{1}{S}\sum_{i=1}^r\frac{q(x_i)}{A_r'(x_i)}
		=
		0
	\end{equation}
	for every $q\in\R[x]$ with $\deg q\leq r-2$.
    By Lemma~\ref{lem:roots}, $-1 < x_i < 1$ for $1\leq i \leq r-1$ and $x_r > 1$. Then, it holds that
	$p(x_i)>0$,
	$i=1,\ldots,r-1$,
	whereas \(p(x_r)<0\). Set
	\begin{equation}\label{eq:lambda}
		\lambda
		:=
		-w_rp(x_r)>0.
	\end{equation}
	We introduce the weighted scalar product on the values at the
	interior nodes \(x_1,\ldots,x_{r-1}\):
	\[
		\langle u,v\rangle_{\mathrm{int}}
		:=
		\sum_{i=1}^{r-1}
		w_i p(x_i)u(x_i)v(x_i).
	\]
	All the weights \(w_i p(x_i)\), \(i=1,\ldots,r-1\), are positive.
	Isolating the exterior node \(x_r\) in \eqref{eq:orth} gives
	\begin{equation}
		\label{eq:reproducing}
		\langle B_r,q\rangle_{\mathrm{int}}
		=
		\lambda B_r(x_r)q(x_r).
	\end{equation}
	Since \(\deg B_r=r-2\), we may choose \(q=B_r\) in
	\eqref{eq:reproducing}. We obtain
	\begin{equation}
		\label{eq:Brnorm}
		\langle B_r,B_r\rangle_{\mathrm{int}}
		=
		\lambda B_r(x_r)^2.
	\end{equation}

	Let now \(q\in\R[x]_{r-2}\) be arbitrary. The Cauchy-Schwarz
	inequality for the weighted scalar product gives
	\[
		\langle B_r,q\rangle_{\mathrm{int}}^2
		\leq
		\langle B_r,B_r\rangle_{\mathrm{int}}
		\langle q,q\rangle_{\mathrm{int}}.
	\]
	Using \eqref{eq:reproducing} and \eqref{eq:Brnorm}, this becomes
	\[
		\lambda^2 B_r(x_r)^2q(x_r)^2
		\leq
		\lambda B_r(x_r)^2
		\sum_{i=1}^{r-1}
		w_i p(x_i)q(x_i)^2.
	\]
	By Lemma~\ref{lem:weights}, \(B_r(x_r)\neq0\). Since also
	\(\lambda>0\), division by \(\lambda B_r(x_r)^2\) yields
	\[
		\lambda q(x_r)^2
		\leq
		\sum_{i=1}^{r-1}
		w_i p(x_i)q(x_i)^2.
	\]
	Recalling \eqref{eq:lambda}, we conclude that
	\[
		\ell(pq^2)
		=
		\sum_{i=1}^r
		w_i p(x_i)q(x_i)^2
		=
		\sum_{i=1}^{r-1}
		w_i p(x_i)q(x_i)^2
		-\lambda q(x_r)^2
		\geq0.
	\]
	In particular, taking \(q=B_r\) in \eqref{eq:Brnorm} gives
	\begin{equation}\label{eq:gbr}
		\ell(pB_r^2)
		=
		\sum_{i=1}^{r-1}
		w_i p(x_i)B_r(x_i)^2
		-\lambda B_r(x_r)^2
		=
		0.
	\end{equation}
	Applying the linear functional $\ell$ to the identity \eqref{eq:pell} yields
	\[
		\ell(A_r^2)+\ell(pB_r^2)=\ell(C_r)=1+2r(r-1)\ell(1-x)=0
	\]
	since $\ell(A_r^2)=\sum_{i=1}^r w_i A_r(x_i)^2 = 0$ because $x_i$ are the roots of $A_r$, and $\ell(pB_r^2)=0$ by \eqref{eq:gbr}. Therefore
	\begin{equation}\label{eq:elx}
		\ell(1-x) = -\frac{1}{2r(r-1)}.
	\end{equation}

	We have thus shown that $\ell(q^2)\geq0$ for every $q\in\R[x]_r$, and $\ell(pq^2)\geq0$ for every $q\in\R[x]_{r-2}$, i.e. $\ell$ is nonnegative on the truncated quadratic
		module $\QM_{r}(p)$ and normalized by $\ell(1)=1$.
		Let $v\in\mathbb{R}$ be any feasible value for the SOS problem \eqref{eq:sos1}.	Since $\ell$ is nonnegative on $\QM_{r}(p)$, it follows that
	$0
		\leq
		\ell(1-x-v)
		=
		\ell(1-x)-v\ell(1)
		=
		\ell(1-x)-v$ and hence $v\leq\ell(1-x)$
	for every feasible value $v$. Taking the supremum over all feasible
	$v$ and recalling \eqref{eq:elx} yields
	\[
		e_r \geq -\ell(1-x) = \frac{1}{2r(r-1)}.
	\]
	On the other hand, Lemma~\ref{lem:pell} gives the reverse inequality, and
	consequently
	\[
		e_r
		=
		\frac{1}{2r(r-1)}.
	\]
\end{proof}

\begin{remark}[Comparison with the Stengle problem]\label{rk:comparison}
	The classical symmetric Stengle problem is $\min_{x\in\R}\{\,1-x^2:\ (1-x^2)^3\ge0\,\}.$ Its feasible set is again $[-1,1]$ and in \cite{H25} it is shown that its exact order-$r$ moment-SOS error
	is $\frac1{r(r-2)}$ for $r\ge3$. The structural reason for $e_r>0$ at every order is that $\QM((1-x^2)^3)$ is
	not stable. Stengle's original estimate, recorded as \cite[Ex.~6.6.6(3)]{S24}, states that there is $c>0$ with $\deg\sigma_0>c\,\varepsilon^{-1/2}$
	in every identity $1-x^2+\varepsilon=\sigma_0+\sigma_1(1-x^2)^3$,
	$\sigma_0,\sigma_1\in\SOS$; the exact value $\frac1{r(r-2)}$ pins down the
	constant in this $\varepsilon^{-1/2}$ law.
	The proof of Theorem~\ref{thm:main1} follows the same broad primal-dual
	architecture as the exact analysis of  \cite{H25}, but the asymmetric
	constraint and the linear objective function makes the analysis simpler.
\end{remark}

\section{Univariate and Bivariate Cusp Problems}\label{sec:cusp}

In this section we show that the extremal example \eqref{eq:pop1} is,
up to an affine change of variables, equivalent to a univariate
problem with split constraints, which is in turn an exact univariate
reduction of a bivariate cusp problem studied in the literature
\cite[Ex.~9.4.6(3)]{M08}, \cite[Ex.~A.3]{BM25}, \cite{K25}. As a consequence, the exact
$\Theta(1/r^2)$ convergence rate of Theorem~\ref{thm:main1} transfers
to all these formulations.

\subsection{Affine change of variables}

The affine substitution $x\mapsto 1-2x$ maps the interval $[0,1]$ onto
$[-1,1]$ and transforms \eqref{eq:pop1} into
\begin{equation}
	\label{eq:pop1p}
	\tag{POP$'$}
	\min_{x\in\R}\ x
	\quad\text{s.t.}\quad
	x^3(1-x)\ge0,
\end{equation}
whose feasible set is $[0,1]$ and whose value is $0$, attained at
$x=0$, where the constraint has a zero of multiplicity three.
Indeed, substituting $x\mapsto 1-2x$ in \eqref{eq:pop1} gives objective
$2x$ and constraint $(2x)^3(2-2x)=16x^3(1-x)$, and neither the
positive scaling of the constraint nor the positive scaling of the
objective affects membership in the truncated quadratic module at a
given order. Consequently, denoting by $e'_r$ the order-$r$ relaxation
error of \eqref{eq:pop1p}, Theorem~\ref{thm:main1} yields
\[
	e'_r=\frac{e_r}{2}=\frac{1}{4r(r-1)},
	\qquad r\ge 2.
\]
\begin{remark}\label{rem:exercise}
	The constraint polynomial $x^3(1-x)$ of \eqref{eq:pop1p} is the subject of
	\cite[Exerc.~5.5.5, p.~200]{S24}, where the reader is asked to prove that in
	any sequence of representations
	$x+\tfrac1n=\sigma_{0,n}+\sigma_{1,n}\,x^3(1-x)$ with
	$\sigma_{0,n},\sigma_{1,n}\in\SOS$ one necessarily has
	$\deg\sigma_{0,n},\deg\sigma_{1,n}\to\infty$. The value
	$e'_r=\frac{1}{4r(r-1)}$ makes this quantitative: such a representation exists
	within $\QM_r(x^3(1-x))$ if and only if $4r(r-1)\geq n$, so the smallest
	admissible order is $r_n=\big\lceil\tfrac{1+\sqrt{1+n}}{2}\big\rceil$ and the
	certificate degrees grow exactly like $\Theta(\sqrt{n})$. The same exercise,
	part (a), shows that $\QM_r(x^3(1-x))$ is a closed cone, so the supremum in
	\eqref{eq:sos1} is attained; here this is also witnessed explicitly by
	Lemma~\ref{lem:pell}.
\end{remark}
\subsection{Splitting the constraint}

The constraint polynomial of \eqref{eq:pop1p} is the product of the two
polynomials $x^3$ and $1-x$. Splitting it accordingly yields the
univariate problem
\begin{equation}
	\label{eq:pop2p}
	\tag{POP$''$}
	\min_{x\in\R}\ x
	\quad\text{s.t.}\quad
	x^3\ge0,\quad 1-x\ge0,
\end{equation}
with the same feasible set $[0,1]$ and the same value $0$. Although
the truncated quadratic modules of the two descriptions differ at each
fixed order, the underlying quadratic modules coincide, and the
truncations agree up to a unit shift of the order.

\begin{lemma}[Product versus split generators]\label{lem:split}
	It holds that
	\[
		\QM(x^3(1-x))=\QM(x^3,\,1-x)
	\]
	and, for every $r\ge2$,
	\[
	\QM_{r}(x^3(1-x))\subseteq\QM_{r+1}(x^3,\,1-x),
	\qquad
	\QM_{r}(x^3,\,1-x)\subseteq\QM_{r+1}(x^3(1-x)).
	\]
\end{lemma}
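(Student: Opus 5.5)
The plan is to prove both truncated inclusions by exhibiting fixed, low-degree identities that express each generator of one description in the quadratic module of the other. The equality $\QM(x^3(1-x))=\QM(x^3,1-x)$ then follows by taking unions over $r$. Since a quadratic module is closed under multiplication by squares and under addition, it suffices to handle a single term $\sigma\,h$, with $\sigma\in\SOS$ and $h$ a generator, and then to track degrees.

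\emph{First inclusion.} The key identity is
\[
x^3(1-x)=(1-x)^2\,x^3+x^4\,(1-x).
\]
To check it, expand: $x^3-2x^4+x^5+x^4-x^5=x^3-x^4$. Here $(1-x)^2$ and $x^4$ are squares, so $x^3(1-x)\in\QM(x^3,1-x)$. Now take a term $\sigma\,x^3(1-x)$ of an element of $\QM_r(x^3(1-x))$, so that $\deg(\sigma x^3(1-x))\le 2r$. Then $\deg\sigma\le 2r-4$. Substituting the identity gives $\sigma(1-x)^2\,x^3$ and $\sigma x^4\,(1-x)$, each of degree at most $2r+1\le 2(r+1)$. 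The pure SOS term $\sigma_0$ is unchanged. This proves $\QM_r(x^3(1-x))\subseteq\QM_{r+1}(x^3,1-x)$.

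\emph{Second inclusion.} Here I need both $x^3$ and $1-x$ in $\QM(x^3(1-x))$ with low degree.

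For $x^3$, use
\[
x^3=x^4+x^3(1-x),
\]
where $x^4=(x^2)^2$ is a square. A term $\sigma x^3$ with $\deg(\sigma x^3)\le 2r$ has $\deg\sigma\le 2r-4$, because $\sigma$ has even degree. It therefore becomes $\sigma x^4+\sigma\,x^3(1-x)$, with degrees at most $2r$.

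For $1-x$, look for $1-x=\sigma_0+c\,x^3(1-x)$ with $\sigma_0$ a quartic SOS. Since $\sigma_0$ must have a double root at $x=1$, we need $c=1$. This gives
\[
1-x=(1-x)^2(1+x+x^2)+x^3(1-x),
\]
and $1+x+x^2=(x+\tfrac12)^2+\tfrac34$ is SOS. A term $\sigma(1-x)$ with $\deg(\sigma(1-x))\le 2r$ has $\deg\sigma\le 2r-2$. It becomes $\sigma(1-x)^2(1+x+x^2)$, of degree at most $2r+2$, plus $\sigma\,x^3(1-x)$, of degree at most $2r+2$. Both fit within order $r+1$. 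Hence $\QM_r(x^3,1-x)\subseteq\QM_{r+1}(x^3(1-x))$.

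The only nontrivial step is finding the certificate for $1-x$. It is not a priori obvious that $1-x$ lies in $\QM(x^3(1-x))$; the example after Lemma~\ref{lem:oddpower} shows that the analogous membership fails at a cubically vanishing endpoint. Membership does hold here because the constraint vanishes to order one at $x=1$, consistent with \cite[Prop.~6.5.16]{S24}. Once the ansatz with a double root at $1$ is made, the identity is forced, and the remaining degree bookkeeping is routine.
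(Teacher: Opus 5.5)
Your proof is correct and follows the paper's proof: it uses the same three identities (with $1+x+x^2=(x+\tfrac12)^2+\tfrac34$ as the SOS factor), the same degree bookkeeping, and the same use of the even degree of SOS multipliers. Obtaining the untruncated equality as the union over $r$ of the truncated inclusions, instead of directly from generator membership, is only a cosmetic difference.
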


\begin{proof}
	The three polynomial identities
	\begin{equation}\label{eq:splitid}
		\begin{aligned}
			x^3      & = x^4 + x^3(1-x),                                         \\
			1-x      & = (1-x)^2\left((x+\tfrac12)^2+\tfrac34\right) + x^3(1-x), \\
			x^3(1-x) & = x^4(1-x) + (1-x)^2x^3,
		\end{aligned}
	\end{equation}
	are verified by expansion. The first two show that both generators
	$x^3$ and $1-x$ belong to $\QM(x^3(1-x))$, hence
	$\QM(x^3,1-x)\subseteq\QM(x^3(1-x))$; the third shows that the product
	generator $x^3(1-x)$ belongs to $\QM(x^3,1-x)$, hence the reverse
	inclusion. This proves the equality of the quadratic modules.

	For the truncated statements, consider an element
	$\sigma_0 +\sigma_1 \cdot  x^3(1-x)\in\QM_{{r}}(x^3(1-x))$, so that
	$$\deg\sigma_0\leq 2r \quad \text{ and }\quad
		\deg\sigma_1\leq 2r-4.$$ The third identity
	of \eqref{eq:splitid} gives
	\[
		\sigma_1 \cdot x^3(1-x)
		=(\sigma_1 \cdot (1-x)^2)  x^3+(\sigma_1 \cdot x^4) (1-x),
	\]
	where $\sigma_1 \cdot (1-x)^2$ and $\sigma_1 \cdot x^4$ are SOS of degrees at most
	$2r-2$ and $2r$, so that
	$$\deg(\sigma_1 \cdot (1-x)^2  x^3)\leq 2r+1\quad
		\text{ and }
		\quad \deg(\sigma_1 \cdot x^4 (1-x))\leq 2r+1.$$
	Hence $\sigma_0+\sigma_1 \cdot x^3(1-x)\in\QM_{r+1}(x^3,1-x)$.

	Conversely, consider
	$\sigma_0+\sigma_1 \cdot x^3+\sigma_2 \cdot (1-x)\in\QM_{r}(x^3,1-x)$, with
	$\deg\sigma_1\leq 2r-4$ and $\deg\sigma_2\leq 2r-2$. By the first two
	identities of \eqref{eq:splitid},
	\[
		\sigma_1 \cdot x^3+\sigma_2 \cdot (1-x)
		=
		\sigma_1 \cdot x^4+\sigma_2 \cdot (1-x)^2\left((x+\tfrac12)^2+\tfrac34\right)
		+(\sigma_1+\sigma_2)\,x^3(1-x),
	\]
	where the first two terms are SOS of degrees at most $2r$ and $2r+2$,
	and the multiplier $\sigma_1+\sigma_2$ is SOS of degree at most $2r-2$,
	so that $\deg((\sigma_1+\sigma_2)x^3(1-x))\leq 2r+2$. Hence the element
	belongs to $\QM_{r+1}(x^3(1-x))$.
\end{proof}

Denoting by $e''_r$ the order-$r$ relaxation error of \eqref{eq:pop2p},
Lemma~\ref{lem:split} immediately yields a two-sided estimate.

\begin{corollary}[Convergence rate of the split formulation]\label{cor:cusprate}
	For every $r\ge3$, it holds
	\[
		\frac{1}{4r(r+1)}
		\;=\;e'_{r+1}\;\leq\;e''_r\;\leq\;e'_{r-1}\;=\;
		\frac{1}{4(r-1)(r-2)}.
	\]
	In particular
	\[
		\lim_{r\to\infty}r^2e''_r=\lim_{r\to\infty}r^2e'_r=\frac14,
	\]
	so the moment-SOS hierarchies of \eqref{eq:pop1p} and \eqref{eq:pop2p}
	share the same exact asymptotic convergence rate $\Theta(1/r^2)$.
\end{corollary}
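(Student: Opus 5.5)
The plan is to obtain both bounds as immediate consequences of the truncated inclusions in Lemma~\ref{lem:split}, combined with the exact values $e'_r=\frac{1}{4r(r-1)}$ obtained from Theorem~\ref{thm:main1} through the affine change of variables. Since the objective $x$ is the same in \eqref{eq:pop1p} and \eqref{eq:pop2p}, and both problems have value $0$, we have $e'_r=-\sup\{v: x-v\in\QM_r(x^3(1-x))\}$ and $e''_r=-\sup\{v: x-v\in\QM_r(x^3,1-x)\}$. Enlarging the cone over which the supremum is taken can only increase the supremum, and therefore can only decrease the error. This monotonicity under inclusion is the key observation.

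\emph{Lower bound.} By the second inclusion of Lemma~\ref{lem:split}, $\QM_r(x^3,1-x)\subseteq\QM_{r+1}(x^3(1-x))$ for $r\ge2$. Hence every $v$ feasible for the split problem at order $r$ is feasible for \eqref{eq:pop1p} at order $r+1$. This gives $e''_r\ge e'_{r+1}=\frac{1}{4(r+1)r}$, which holds since $r+1\ge2$.

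\emph{Upper bound.} Apply the first inclusion with $r-1$ in place of $r$, which requires $r-1\ge2$, i.e.\ $r\ge3$. This yields $\QM_{r-1}(x^3(1-x))\subseteq\QM_r(x^3,1-x)$, and hence $e''_r\le e'_{r-1}=\frac{1}{4(r-1)(r-2)}$. This is exactly where the hypothesis $r\ge3$ is used. One should also check that the order $r-1$ is admissible for \eqref{eq:pop1p}, i.e.\ $2(r-1)\ge4$; this holds for $r\ge3$.

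\emph{Limits.} Multiplying the sandwich by $r^2$, both bounds $\frac{r^2}{4r(r+1)}$ and $\frac{r^2}{4(r-1)(r-2)}$ tend to $\frac14$, so the squeeze theorem gives $\lim r^2e''_r=\frac14$. The limit $\lim r^2e'_r=\frac14$ is immediate from the closed form. There is no real obstacle here; the only care needed is matching the order shifts and the ranges of $r$ in Lemma~\ref{lem:split} and Theorem~\ref{thm:main1}.
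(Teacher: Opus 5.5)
Your proposal is correct and matches the paper's proof: both bounds come from the two truncated inclusions of Lemma~\ref{lem:split} (the lower bound via $\QM_r(x^3,1-x)\subseteq\QM_{r+1}(x^3(1-x))$, the upper bound via the other inclusion applied at order $r-1$), combined with $e'_r=\frac{1}{4r(r-1)}$, and the limit follows by squeezing. You also state explicitly where $r\ge 3$ is used, which the paper's short argument leaves implicit.
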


\begin{proof}
	By Lemma~\ref{lem:split}, every certificate for \eqref{eq:pop2p} at
	order $r$ is a certificate for \eqref{eq:pop1p} at order $r+1$, so
	$e'_{r+1}\leq e''_r$, and symmetrically $e''_r\leq e'_{r-1}$. The
	limits follow from $e'_r=\frac{1}{4r(r-1)}$.
\end{proof}

Thus splitting the quartic constraint of \eqref{eq:pop1p} into a cubic
and a linear constraint changes the relaxation values only marginally:
the cost of separating the generators is at most one relaxation order.

\subsection{Reduction of the bivariate cusp problem}

Consider now the bivariate cusp problem
\begin{equation}
	\label{eq:popcusp}
	\tag{POP$^{\rm cusp}$}
	\min_{(x_1,x_2)\in\R^2}\ x_1
	\quad\text{s.t.}\quad
	x_1^3-x_2^2\ge0,\quad 1-x_1\ge0,
\end{equation}
whose feasible set $K$ is delimited by the cusp arcs
$x_2=\pm x_1^{3/2}$ and the vertical segment $x_1=1$. This problem appears in
\cite[Ex.~9.4.6(3)]{M08} and \cite[Ex.~A.3]{BM25} as an elementary
instance for which the \L{}ojasiewicz exponent of the constraint
description equals $3$, and it is used as a benchmark for high
precision semidefinite solvers in \cite{K25}. Its value is $0$,
attained at the cusp point $(0,0)$, where neither constraint
qualification nor strict complementarity holds.

The univariate problem \eqref{eq:pop2p} is precisely the restriction
of \eqref{eq:popcusp} to the axis $x_2=0$, with $x:=x_1$. The
following lemma shows that this restriction is exact order by order
for the SOS hierarchy.

\begin{lemma}[Exact univariate reduction]\label{lem:bivred}
	For every $r\ge2$, the order-$r$ SOS relaxation values of
	\eqref{eq:popcusp} and \eqref{eq:pop2p} coincide, i.e. denoting by
	$e^{\rm cusp}_r$ the order-$r$ relaxation error of \eqref{eq:popcusp},
	it holds
	\[
		e^{\rm cusp}_r=e''_r.
	\]
\end{lemma}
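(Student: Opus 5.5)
We prove the two inequalities $e^{\rm cusp}_r\le e''_r$ and $e^{\rm cusp}_r\ge e''_r$ separately. Since both problems have value $0$, this is equivalent to showing that for every $v\in\R$,
\[
x_1-v\in\QM_r(x_1^3-x_2^2,\,1-x_1)\subset\R[x_1,x_2]
\quad\Longleftrightarrow\quad
x-v\in\QM_r(x^3,\,1-x)\subset\R[x].
\]

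\textbf{Restriction: bivariate implies univariate.} Take a bivariate certificate
\[
x_1-v=\sigma_0(x_1,x_2)+\sigma_1(x_1,x_2)\,(x_1^3-x_2^2)+\sigma_2(x_1,x_2)\,(1-x_1),
\qquad \deg(\sigma_i p_i)\le 2r,
\]
and substitute $x_2=0$, $x_1=x$. Each $\sigma_i(x,0)$ is a univariate SOS of degree at most $\deg\sigma_i$, and $(x_1^3-x_2^2)|_{x_2=0}=x^3$. The result is an identity in $\QM_r(x^3,1-x)$ with the same degree bounds. This gives $e''_r\le e^{\rm cusp}_r$.

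\textbf{Lifting: univariate implies bivariate.} Start from a univariate certificate
\[
x-v=s_0(x)+s_1(x)\,x^3+s_2(x)\,(1-x),
\qquad \deg s_1\le 2r-4,\ \ \deg s_2\le 2r-2,\ \ \deg s_0\le 2r.
\]
Set $x=x_1$ and use the identity $x_1^3=(x_1^3-x_2^2)+x_2^2$. This gives
\[
x_1-v=\bigl(s_0(x_1)+x_2^2\,s_1(x_1)\bigr)+s_1(x_1)\,(x_1^3-x_2^2)+s_2(x_1)\,(1-x_1).
\]
Here $s_0+x_2^2s_1$ is an SOS, since $s_1=\sum h_j^2$ gives $x_2^2s_1=\sum (x_2h_j)^2$.

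For the degree bounds:
\begin{itemize}
\item $\deg\bigl(x_2^2 s_1\bigr)\le 2r-2\le 2r$;
\item $\deg\bigl(s_1\,(x_1^3-x_2^2)\bigr)=\deg s_1+3\le 2r-1$;
\item the remaining term is unchanged.
\end{itemize}
So the identity lies in $\QM_r(x_1^3-x_2^2,1-x_1)$, which gives $e^{\rm cusp}_r\le e''_r$.

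\textbf{Degree convention.} The only delicate point is how the truncation is read. The degree of $\sigma_1p_1$ uses total degree in $(x_1,x_2)$. Since $\deg(x_1^3-x_2^2)=3=\deg x^3$, the univariate bound $\deg(s_1x^3)\le 2r$ transfers directly, and the substitution $x_2=0$ cannot increase degrees. I expect no real obstacle beyond this check.

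Combining the two directions gives $e^{\rm cusp}_r=e''_r$ for every $r\ge2$. By Corollary~\ref{cor:cusprate}, the cusp problem therefore has the exact rate $\Theta(1/r^2)$.
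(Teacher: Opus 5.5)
Your proposal is correct and matches the paper's proof: restriction to $x_2=0$ for one direction, and for the other the identity $x_1^3=(x_1^3-x_2^2)+x_2^2$ with $x_2^2 s_1$ absorbed into the SOS term, using $\deg s_1\le 2r-4$ for the degree bound. Your remark that restriction gives multipliers of degree \emph{at most} $\deg\sigma_i$ is slightly more precise than the paper's ``same degrees''.
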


\begin{proof}
	If $x_1-v=\sigma_0+\sigma_1 \cdot (x_1^3-x_2^2)+\sigma_2 \cdot (1-x_1)$ is a
	certificate of order $r$ for \eqref{eq:popcusp}, with
	$\sigma_i\in\Sigma[x_1,x_2]$ of admissible degrees, then restricting
	to $x_2=0$ gives
	$$x_1-v=\sigma_0(x_1,0)+\sigma_1(x_1,0)x_1^3+\sigma_2(x_1,0)(1-x_1),$$
	where the restricted multipliers are univariate SOS of the same
	degrees, i.e. a certificate of order $r$ for \eqref{eq:pop2p}.

	Conversely, if $x-v=\sigma_0+\sigma_1 \cdot x^3+\sigma_2 \cdot (1-x)$ is a
	certificate of order $r$ for \eqref{eq:pop2p}, then substituting
	$x:=x_1$ and using the identity
	$x_1^3=(x_1^3-x_2^2)+x_2^2$ gives
	\[
		x_1-v=(\sigma_0(x_1)+x_2^2\sigma_1(x_1))
		+\sigma_1(x_1)(x_1^3-x_2^2)+\sigma_2(x_1)(1-x_1),
	\]
	where $\sigma_0(x_1)+x_2^2\sigma_1(x_1)$ is a bivariate SOS polynomial
	of degree at most
	$$\max\{\deg\sigma_0,\deg\sigma_1+2\}\leq 2r,$$ since
	$\deg\sigma_1\leq 2r-4$. This is a certificate of order $r$ for
	\eqref{eq:popcusp}. The two relaxation values therefore coincide at
	every order $r\ge2$.
\end{proof}

Combining Corollary~\ref{cor:cusprate} and Lemma~\ref{lem:bivred}, the
bivariate cusp problem \eqref{eq:popcusp} inherits the exact
asymptotic convergence rate
\[
	\lim_{r\to\infty}r^2e^{\rm cusp}_r=\frac14
\]
of the extremal example \eqref{eq:pop1}. This is consistent with the
general \L{}ojasiewicz-based bounds of \cite{BM23,BMP25}: the cusp
description has \L{}ojasiewicz exponent $\mathrm{L}=3$, yet the actual
convergence rate does not deteriorate below the universal univariate
exponent $2$ of Theorem~\ref{thm:main}. Figure~\ref{fig:cuspmom}
displays the outer approximations of $K$ computed by the
moment relaxations of orders $r=2,3,4$, illustrating how the
relaxations overshoot the cusp point along the negative $x_1$ axis by
the amount $e^{\rm cusp}_r$.

\begin{figure}[ht]
	\centering
	\includegraphics[width=.6\textwidth]{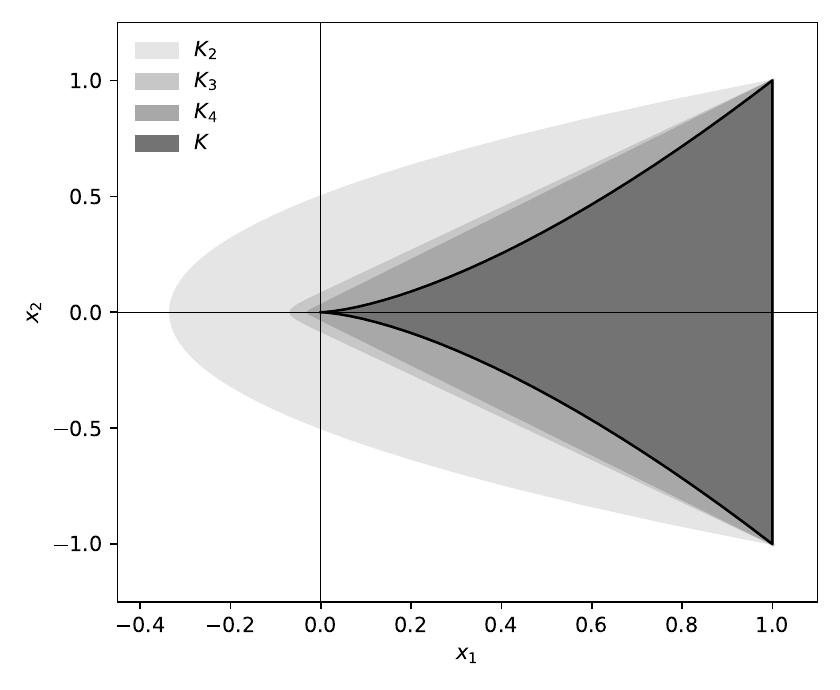}
	\caption{Outer approximations of the cusp feasible set
	$K$ (dark gray) computed by the moment relaxations of
	orders $r=2,3,4$ (nested regions, from light to darker). The
	minimum of $x_1$ over the order-$r$ relaxation equals
	$-e^{\rm cusp}_r$, i.e. $-\tfrac13$, $-\tfrac1{15}$ and
	$-\tfrac1{35}$ for $r=2,3,4$ respectively.}
	\label{fig:cuspmom}
\end{figure}

\begin{remark}[Exact relaxation values]\label{rem:exact}
	The two-sided estimate of Corollary~\ref{cor:cusprate} can be refined
	to an exact value. The substitution $x=1-t^2$ maps the univariate cusp
	problem \eqref{eq:pop2p} to the symmetric Stengle problem
	$\min_t\{1-t^2:(1-t^2)^3\ge0\}$ recalled in Remark \ref{rk:comparison}, and a parity argument on the extremal
	Chebyshev-Gegenbauer certificates of \cite{H25} shows that the order-$r$
	relaxation of \eqref{eq:pop2p} matches the order-$(2r-1)$ relaxation
	of the Stengle problem. Together with the exact Stengle value
	$\frac{1}{s(s-2)}$ at order $s$ from \cite{H25}, this yields
	\[
		e^{\rm cusp}_r=e''_r=\frac{1}{(2r-1)(2r-3)},
		\qquad r\ge2,
	\]
	which indeed satisfies the bounds of Corollary~\ref{cor:cusprate}.
\end{remark}

\section{Conclusion}

We have established a universal quadratic upper bound on the convergence rate of the moment-SOS hierarchy for polynomial optimization on a bounded subset of the real line. More precisely, for every fixed univariate POP of the form \eqref{eq:pop}, there exists a constant $C>0$ such that
$0\leq e_r\leq \frac{C}{r^2}$
for all sufficiently large relaxation orders $r$, where $e_r$ is the error at the relaxation order $r$. We have also shown that this quadratic rate is optimal, and achieved by the elementary degree-four example \eqref{eq:pop1} for which $e_r=1/(2r(r-1))$, $r\geq 2$.

The main difficulty is that the natural generators of the feasible set need not belong to the quadratic module generated by the original constraints. We overcome this difficulty by combining the univariate structure theorem for quadratic modules with a Chebyshev polynomial construction. Odd powers of the natural generators belong to the original quadratic module, and the Chebyshev construction approximately recovers the generators themselves with an error of order $1/r^2$ and with certificate degree growing linearly in $r$.

A natural open question is whether the quadratic exponent remains universal for bivariate polynomial optimization. At present, we do not know any bivariate POP for which the convergence of the moment-SOS hierarchy is slower than $O(1/r^2)$. It would therefore be desirable to prove that no such example exists, namely that, under the usual Archimedean assumptions and for fixed problem data, every bivariate POP satisfies $e_r=O(1/r^2)$.
Such a result would show that the deterioration predicted by general dimension-dependent and \L{}ojasiewicz-based estimates is not intrinsic in dimension two, and would constitute a first step toward identifying the precise role of dimension in the worst-case convergence rate of the moment-SOS hierarchy.

\appendix

\section*{Acknowledgement}

The mathematical writing was done with the help of ChatGPT 5.6 and Claude Fable 5.
This work benefited from feedback by Victor Magron.
The first author is grateful to Juan Vera Lizcano for figuring out, during the Oberwolfach workshop on conic linear optimization for computer-assisted proofs \cite{OWR26}, the reformulation of the univariate cusp problem as a Stengle problem.

\end{document}